\newtheorem{theorem}{Theorem}[section]
\newtheorem{proposition}[theorem]{Proposition}
\newtheorem{lemma}[theorem]{Lemma}
\theoremstyle{definition}
\newtheorem{definition}[theorem]{Definition}
\newtheorem{remark}[theorem]{Remark}
\newtheorem*{remark*}{Remark}
\newcommand{\type}{\operatorname{type}}
\newcommand{\tri}{\triangleleft}
\newcommand{\pr}{\operatorname{pr}}
\newcommand{\z}{\mathbb{Z}}
\begin{document}

\title[Affine extensions of MCQs and augmented MCQ Alexander pairs]
{Affine extensions of multiple conjugation quandles and augmented MCQ Alexander pairs}

\author[T.~Murao]{Tomo Murao}
\address[T.~Murao]{Global Education Center, Waseda University, 1-6-1 Nishi-Waseda, Shinjuku, Tokyo 169-8050, Japan}
\email{tmurao@aoni.waseda.jp}

\keywords{multiple conjugation quandle; affine extension; augmented MCQ Alexander pair; handlebody-knot}
\subjclass[2010]{Primary 57M27; Secondary 57M25, 57M15}% Subject code(s)

\begin{abstract}
A multiple conjugation quandle is an algebra 
whose axioms are motivated from handlebody-knot theory.
Any linear extension of a multiple conjugation quandle 
can be described by using a pair of maps called an MCQ Alexander pair.
In this paper, 
we show that 
any affine extension of a multiple conjugation quandle 
can be described by using a quadruple of maps, called an augmented MCQ Alexander pair.
\end{abstract}

\maketitle
%\tableofcontents%目次

%%%%%%%%%%%%%%%%%%%%%%%%%%%%%%%%%%%%%%%%%%%%%%%%%%%%%%%%%%%
%%%%%%%%%%%%%%%%%%%%%%%%%%%%%%%%%%%%%%%%%%%%%%%%%%%%%%%%%%%
\section{Introduction}

A quandle~\cite{Joyce82,Matveev82} is an algebraic structure 
whose axioms are derived from the Reidemeister moves for oriented knots.
Ishii and Oshiro~\cite{IshiiOshiro19} introduced 
a pair (resp. triple) of maps called an Alexander pair (resp. augmented Alexander pair), 
which is a dynamical cocycle~\cite{AndruskiewitschGrana03} 
corresponding to a linear (resp. affine) extension of a quandle. 
A linear/affine extension of a quandle plays an important role 
in constructing knot invariants.
Actually, 
(twisted) Alexander invariants~\cite{Alexander28,Lin01,Wada94} 
and quandle cocycle invariants~\cite{CarterJelsovskyKamadaLangfordSaito03} 
for knots 
correspond to 
linear and affine extensions 
of quandles, respectively~\cite{IshiiOshiro19}.

A multiple conjugation quandle (MCQ)~\cite{Ishii15} is an algebraic structure 
whose axioms are derived from the Reidemeister moves for handlebody-knots~\cite{Ishii08}, 
which is a handlebody embedded in the 3-sphere $S^3$.
A genus 1 handlebody-knot can be regarded as a knot 
by taking that spine, 
which means that a handlebody-knot is a generalization of a knot with respect to a genus.
Recently, 
the author~\cite{Murao21} introduced an MCQ Alexander pair, 
which is an MCQ version of an Alexander pair.
That is, 
an MCQ Alexander pair is a pair of maps 
yielding a linear extension of an MCQ.
In this paper, 
we introduce a quadruple of maps, 
called an augmented MCQ Alexander pair, 
which is an MCQ version of an augmented Alexander pair.
That is, 
an augmented MCQ Alexander pair yields an affine extension of an MCQ.

Similarly to quandles, 
a linear/affine extension of an MCQ plays an important role 
in constructing handlebody-knot invariants. 
%\textcolor{red}{\cite{IshiiMuraopre}}校正時に加える
For example, 
(twisted) Alexander invariants~\cite{IshiiNikkuniOshiro18} for handlebody-knots 
are obtained through the theory of linear extensions of MCQs, 
and MCQ cocycle invariants~\cite{CarterIshiiSaitoTanaka17} for handlebody-knots 
are related to their affine extensions (see Section~2).

An MCQ is a quandle consisting of a disjoint union of some groups.
Hence an MCQ equips some binary operations: 
a quandle operation and each group operation.
For that reason, 
we need to deal with a 6-tuple of maps to consider an affine extension of an MCQ, 
and it is very complicated.
In this paper, 
we show that 
a 6-tuple of maps which gives an affine extension of an MCQ can be reduced
to some augmented MCQ Alexander pair modulo isomorphism.
That is, 
any affine extension of an MCQ 
can be realized using some augmented MCQ Alexander pair 
up to isomorphism.

The outline of the paper is as follows.
In Section~\ref{sec:Multiple conjugation quandles and augmented MCQ Alexander pairs}, 
we recall the notion of an MCQ Alexander pair 
and introduce an augmented MCQ Alexander pair.
We see that it is related to an extension of an MCQ.
Furthermore, 
we see that 
an MCQ 2-cocycle corresponds to 
 a special case of an augmented MCQ Alexander pair.
In Section~\ref{sec:Affine extensions of multiple conjugation quandles}, 
we consider affine extensions of MCQs.
We give a 6-tuple of maps 
corresponding to an affine extension of an MCQ.
In Section~\ref{sec:The reduction of affine extensions of MCQs to augmented MCQ Alexander pairs},
we show that 
any affine extension of an MCQ 
can be realized by using an augmented MCQ Alexander pair 
modulo isomorphism.

%%%%%%%%%%%%%%%%%%%%%%%%%%%%%%%%%%%%%%%%%%%%%%%%%%%%%%%%%%%
%%%%%%%%%%%%%%%%%%%%%%%%%%%%%%%%%%%%%%%%%%%%%%%%%%%%%%%%%%%
\section{Multiple conjugation quandles and augmented MCQ Alexander pairs}
\label{sec:Multiple conjugation quandles and augmented MCQ Alexander pairs}

A \textit{quandle}~\cite{Joyce82,Matveev82} is a pair of a set $Q$ 
and a binary operation $\triangleleft:Q\times Q\to Q$ satisfying the following axioms:
\begin{itemize}
\item[(Q1)]
For any $a\in Q$, $a\triangleleft a=a$.
\item[(Q2)]
For any $a\in Q$, the map $S_a:Q\to Q$ defined by $S_a(x)=x\triangleleft a$ is bijective.
\item[(Q3)]
For any $a,b,c\in Q$, $(a\triangleleft b)\triangleleft c=(a\triangleleft c)\triangleleft(b\triangleleft c)$.
\end{itemize}
We denote $S_a^n(x)$ by $x \triangleleft^n a$ 
for any $a,x \in Q$ and $n\in\mathbb{Z}$.
In the following, 
we see some examples of quandles.
Let $G$ be a group.
The \textit{conjugation quandle} of $G$, 
denoted by $\operatorname{Conj}G$, 
is defined to be the group $G$ 
with the binary operation $a\triangleleft b=b^{-1}ab$.
For a positive integer $n$, 
we denote by $\mathbb{Z}_n$ the cyclic group $\mathbb{Z}/n\mathbb{Z}$ of order $n$.
The \textit{dihedral quandle} of order $n$, denoted by $R_n$, 
is defined to be the cyclic group $\mathbb{Z}_n$ 
with the binary operation $a\triangleleft b=2b-a$.
Let $R$ be a ring.
The \textit{Alexander quandle} is defined to be a left $R[t^{\pm 1}]$-module 
with the binary operation $a\triangleleft b=ta+(1-t)b$.

For quandles $(Q_1,\triangleleft_1)$ and $(Q_2,\triangleleft_2)$, 
a \textit{quandle homomorphism} $f:Q_1\to Q_2$ is defined 
to be a map $f:Q_1\to Q_2$ satisfying $f(a\triangleleft_1 b)=f(a)\triangleleft_2 f(b)$ for any $a,b\in Q_1$.
We call a bijective quandle homomorphism a \textit{quandle isomorphism}.
$Q_1$ and $Q_2$ are \textit{isomorphic}, denoted by $Q_1 \cong Q_2$, 
if there exists a quandle isomorphism from $Q_1$ to $Q_2$.

We define the \textit{type} of a quandle $Q$, denoted by $\type Q$, 
by the minimal number of $n \in \z_{>0}$ satisfying $a\triangleleft^n b=a$ for any $a,b \in Q$.
We set $\type Q:= \infty$ if we do not have such a positive integer $n$.
Any finite quandle is of finite type 
since 
the set $\{S_a^n \mid n\in \z \}$ is finite for any $a \in Q$ when $Q$ is finite.
For a quandle $Q$, 
an \textit{extension} of $Q$ is a quandle $\widetilde{Q}$ 
which has a surjective homomorphism $f : \widetilde{Q} \to Q$ 
such that for any element of $Q$, 
the cardinality of the inverse image by $f$ is constant.

\begin{definition}[\cite{Ishii15}]
A \textit{multiple conjugation quandle (MCQ)} $X$ 
is a disjoint union of groups $G_{\lambda} (\lambda \in \Lambda)$ 
with a binary operation $\tri : X \times X \to X$ 
satisfying the following axioms:
\begin{itemize}
\item
For any $a,b \in G_\lambda$, 
$a\tri b = b^{-1}ab$.
\item
For any $x \in X$ and $a,b \in G_\lambda$, 
$x \tri e_\lambda = x$ and $x \tri (ab)=(x \tri a) \tri b$, 
where $e_\lambda$ is the identity of $G_\lambda$.
\item
For any $x,y,z \in X$, 
$(x \tri y) \tri z=(x \tri z) \tri (y \tri z)$.
\item
For any $x \in X$ and $a,b \in G_\lambda$, 
$(ab) \tri x=(a \tri x)(b \tri x)$, 
where $a \tri x, b \tri x \in G_\mu$ for some $\mu \in \Lambda$.
\end{itemize}
\end{definition}

In this paper, we often omit brackets.
When doing so, we apply binary operations from left on expressions, 
except for group operations, 
which we always apply first.
For example, 
we write $a \triangleleft_1 b \triangleleft_2 cd \triangleleft_3 (e \triangleleft_4 f \triangleleft_5 g)$ 
for $((a \triangleleft_1 b) \triangleleft_2 (cd)) \triangleleft_3 ((e \triangleleft_4 f) \triangleleft_5 g)$ simply, 
where each $\triangleleft_i$ is a binary operation, 
and $c$ and $d$ are elements of the same group.
Furthermore, 
unless otherwise specified, 
we assume that each $G_\lambda$ is a group 
when $\bigsqcup_{\lambda \in \Lambda}G_\lambda$ is an MCQ.
We denote by $G_a$ the group $G_\lambda$ containing $a \in X$.
We also denote by $e_\lambda$ the identity of $G_\lambda$.
Then the identity of $G_a$ is denoted by $e_a$ for any $a \in X$.

We remark that an MCQ itself is a quandle.
For MCQs $X_1=\bigsqcup_{\lambda \in \Lambda}G_\lambda$ and $X_2=\bigsqcup_{\mu \in M}G_\mu$, 
an \textit{MCQ homomorphism} $f : X_1 \to X_2$ is defined to be a map 
from $X_1$ to $X_2$ satisfying $f(x \tri y)=f(x) \tri f(y)$ for any $x,y \in X_1$ 
and $f(ab)=f(a)f(b)$ for any $\lambda \in \Lambda$ and $a,b \in G_\lambda$.
We call a bijective MCQ homomorphism an \textit{MCQ isomorphism}.
$X_1$ and $X_2$ are \textit{isomorphic}, denoted by $X_1 \cong X_2$, 
if there exists an MCQ isomorphism from $X_1$ to $X_2$.

For an MCQ $X=\bigsqcup_{\lambda \in \Lambda}G_\lambda$, 
an \textit{extension} of $X$ is an MCQ $\widetilde{X}$ 
which has a surjective MCQ homomorphism $f: \widetilde{X} \to X$
such that 
for any element of $X$, 
the cardinality of the inverse image by $f$ is constant.

%\begin{proposition}
%Let  $X=\bigsqcup_{\lambda \in \Lambda}G_\lambda$ and $\widetilde{X}$ be MCQs.
%Then $\widetilde{X}$ is an extension of $X$ 
%if and only if 
%there exists a set $A$ 
%such that 
%$\bigcup_{\lambda \in \Lambda}(G_\lambda \times A)$ is an MCQ which is isomorphic to $\widetilde{X}$, 
%and that the projection $\pr_X : \bigcup_{\lambda \in \Lambda}(G_\lambda \times A) \to X$ sending $(x,u)$ to $x$ 
%is an MCQ homomorphism.
%\end{proposition}

Next, 
we recall the notion of a $G$-family of quandles~\cite{IshiiIwakiriJangOshiro13}.
It is an algebraic system which gives an MCQ.
Let $G$ be a group with the identity $e$.
A \textit{$G$-family of quandles} is a non-empty set $X$ 
with a family of binary operations $ \tri ^g:X \times X \to X ~(g \in G)$ 
satisfying the following axioms: 
\begin{itemize}
\item
For any $x \in X$ and $g \in G$, 
$x \tri ^gx=x.$
\item
For any $x,y \in X$ and $g,h \in G$, 
$x \tri ^ey=x$ and $x \tri ^{gh}y=(x \tri ^gy) \tri ^hy$.
\item
For any $x,y,z \in X$ and $g,h \in G$, 
$(x \tri ^gy) \tri ^hz=(x \tri ^hz) \tri ^{h^{-1}gh}(y \tri ^hz)$.
\end{itemize}

Let $R$ be a ring and $G$ be a group with the identity $e$.
Let $X$ be a right $R[G]$-module, where $R[G]$ is the group ring of $G$ over $R$.
Then $(X,\{  \tri ^g \}_{g \in G})$ is a $G$-family of quandles, called a \textit{$G$-family of Alexander quandles}, 
with $x \tri ^gy=xg+y(e-g)$.
Let $(Q, \tri )$ be a quandle.
Then $(Q,\{ \tri ^i\}_{i \in \mathbb{Z}})$ is a $\mathbb{Z}$-family of quandles.
Furthermore 
if $\type Q$ is finite, 
$(Q,\{ \tri ^i\}_{i \in \mathbb{Z}_{\type Q}})$ is a $\mathbb{Z}_{\type Q}$-family of quandles.
%In particular, 
%when $X$ is an Alexander quandle, 
%$(X,\{ \tri ^i\}_{i \in \mathbb{Z}_{k}})$ is called a \textit{$\mathbb{Z}_{k}$-family of Alexander quandles}.

Let $(X,\{  \tri ^g \}_{g \in G})$ be a $G$-family of quandles.
Then $G \times X =\bigsqcup_{x \in X} (G \times \{ x \})$ is an MCQ with 
\begin{align*}
(g,x) \tri (h,y):=(h^{-1}gh,x \tri ^hy),
\hspace{7mm}
(g,x)(h,x):=(gh,x)
\end{align*}
for any $x,y \in X$ and $g,h \in G$~\cite{Ishii15}.
We call it the \textit{associated MCQ} 
of  $(X,\{  \tri ^g \}_{g \in G})$.
The associated MCQ $G \times X$ of a $G$-family of quandles $X$ is an extension of $G$, 
where we regard $G$ as an MCQ with the conjugation operation.

Throughout this paper, unless otherwise stated, 
we assume that every ring has the multiplicative identity $1 \neq 0$.
For a ring $R$, 
we denote by $R^\times$ the group of units of $R$.
In the following, 
we introduce a pair of maps, called an MCQ Alexander pair~\cite{Murao21}, 
which corresponds to a linear extension of an MCQ.
Furthermore 
we introduce a quadruple of maps, called an augmented MCQ Alexander pair, 
which corresponds to an affine extension of an MCQ 
as seen in Proposition~\ref{prop:augmented MCQ Alexander pair}.

\begin{definition}
Let $X=\bigsqcup_{\lambda \in \Lambda}G_\lambda$ be an MCQ, 
$R$ a ring and $M$ a left $R$-module.
Let $f_1,f_2 : X \times X \to R$, 
$\phi_1: X \times X \to M$ 
and $\phi_2: \bigsqcup_{\lambda \in \Lambda}(G_\lambda \times G_\lambda) \to M$ 
be maps.

\begin{enumerate}
\item
The pair $(f_1,f_2)$ is an \textit{MCQ Alexander pair} 
if $f_1$ and $f_2$ satisfy the following conditions: 

\begin{itemize} 
\item
For any $a,b \in G_\lambda$,
\begin{align*}
f_1(a,b)+f_2(a,b)=f_1(a,a^{-1}b).
\end{align*}

\item
For any $a,b \in G_\lambda$ and $x \in X$, 
\begin{align*}
&f_1(a,x)=f_1(b,x),\\
&f_2(ab,x)=f_2(a,x)+f_1(b \tri x,a^{-1} \tri x)f_2(b,x).
\end{align*}

\item
For any $x \in X$ and $a,b \in G_{\lambda}$,
\begin{align*}
&f_1(x,e_{\lambda})=1,\\
&f_1(x,ab)=f_1(x \tri a,b)f_1(x,a),\\
&f_2(x,ab)=f_1(x \tri a,b)f_2(x,a).
\end{align*}

\item
For any $x,y,z \in X$, 
\begin{align*}
&f_1(x \tri y,z)f_1(x,y)=f_1(x \tri z,y \tri z)f_1(x,z),\\
&f_1(x \tri y,z)f_2(x,y)=f_2(x \tri z,y \tri z)f_1(y,z),\\
&f_2(x \tri y,z)=f_1(x \tri z,y \tri z)f_2(x,z)+f_2(x \tri z,y \tri z)f_2(y,z).
\end{align*}
\end{itemize}

\item
For an MCQ Alexander pair $(f_1,f_2)$, 
the pair $(\phi_1,\phi_2)$ is an \textit{$(f_1,f_2)$-twisted 2-cocycle} 
if $\phi_1$ and $\phi_2$ satisfy the following conditions:

\begin{itemize}
\item
For any $a,b,c \in G_\lambda$, 
\[\phi_2(a,b)+\phi_2(ab,c)=f_1(a,a^{-1})\phi_2(b,c)+\phi_2(a,bc).\]

\item
For any $a,b \in G_\lambda$, 
\[f_1(b,b^{-1})\phi_1(a,b)+\phi_2(b,b^{-1}ab)=\phi_2(a,b).\]

%\color[gray]{0.5}
%\item[不要]
%For any $x \in X$ and $a \in G_\lambda$, 
%\[f_2(x,e_\lambda)\phi_2(e_\lambda,a)=\phi_1(x,e_\lambda).\]
%\color{black}

\item
For any $x\in X$ and $a,b\in G_\lambda$, 
\[f_2(x,ab)\phi_2(a,b)+\phi_1(x,ab)=f_1(x \tri a,b)\phi_1(x,a)+\phi_1(x \tri a,b).\]

\item
For any $x,y,z \in X$, 
\begin{align*}
&f_1(x \tri y,z)\phi_1(x,y)+\phi_1(x\tri y,z)\\
&=f_1(x \tri z,y \tri z)\phi_1(x,z)+f_2(x \tri z,y \tri z)\phi_1(y,z)+\phi_1(x \tri z,y \tri z).
\end{align*}

\item
For any $a,b \in G_\lambda$ and $x\in X$, 
\begin{align*}
&f_1(ab,x)\phi_2(a,b)+\phi_1(ab,x)\\
&=\phi_1(a,x)+f_1(a \tri x,a^{-1} \tri x)\phi_1(b,x)+\phi_2(a \tri x,b \tri x).
\end{align*}
\end{itemize}
\end{enumerate}
We call $(f_1,f_2;\phi_1,\phi_2)$ an \textit{augmented MCQ Alexander pair} 
if $(f_1, f_2)$ is an MCQ Alexander pair and $(\phi_1,\phi_2)$ is an $(f_1, f_2)$-twisted 2-cocycle.
\end{definition}

By the definition and \cite[Lemma~2.6]{Murao21}, 
we have the following lemma immediately.

\begin{lemma}\label{lem:augmentedMCQAlexanderPairProperty}%$\phi_1$以外は前論文のまま
Let $X=\bigsqcup_{\lambda \in \Lambda}G_\lambda$ be an MCQ, $R$ a ring 
and $M$ be a left $R$-module.
Let $(f_1,f_2)$ be an MCQ Alexander pair of maps $f_1,f_2 : X \times X \to R$ 
and let $(\phi_1,\phi_2)$ be an $(f_1,f_2)$-twisted 2-cocycle of maps 
$\phi_1: X \times X \to M$ 
and $\phi_2: \bigsqcup_{\lambda \in \Lambda}(G_\lambda \times G_\lambda) \to M$.
For any $x,y \in X$ and $a,b \in G_\lambda$, 
the following hold.
\begin{align*}
&\text{$f_1(x,y)$ is invertible, and~} 
f_1(x,y)^{-1}=f_1(x \tri y,y^{-1}),\\
&f_2(e_\lambda,x)=0,\\
&f_1(ab,x)f_1(a,a^{-1})=f_1(b \tri x,a^{-1} \tri x)f_1(b,x),\\
&f_2(x \tri a,b)=f_2(x,ab)f_1(a,a^{-1}),\\
&\phi_1(x,x)=0,\\
&\phi_2(e_\lambda,a)=\phi_2(e_\lambda,b).
\end{align*}
\end{lemma}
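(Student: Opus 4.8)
The first four identities involve only the MCQ Alexander pair $(f_1,f_2)$ and make no reference to $(\phi_1,\phi_2)$; they are precisely the content of \cite[Lemma~2.6]{Murao21}, so the plan is simply to invoke that result for them. In particular, the invertibility of $f_1(x,y)$ asserted in the first identity will be available as a tool when treating the remaining two. It then remains to establish the identities involving $\phi_1$ and $\phi_2$, and for these I would specialize the defining conditions of an $(f_1,f_2)$-twisted 2-cocycle and cancel invertible scalar factors.

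To obtain $\phi_1(x,x)=0$ I would exploit that $X=\bigsqcup_{\lambda\in\Lambda}G_\lambda$, so every $x\in X$ lies in some group $G_\lambda$ and may be handled as a group element. Substituting $a=b=x$ into the second $(f_1,f_2)$-twisted 2-cocycle condition $f_1(b,b^{-1})\phi_1(a,b)+\phi_2(b,b^{-1}ab)=\phi_2(a,b)$ and using $x^{-1}xx=x$ collapses the two $\phi_2$ terms against each other, leaving $f_1(x,x^{-1})\phi_1(x,x)=0$. Since $f_1(x,x^{-1})$ is invertible by the first identity, I would conclude $\phi_1(x,x)=0$.

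To obtain $\phi_2(e_\lambda,a)=\phi_2(e_\lambda,b)$ I would start from the first condition $\phi_2(a,b)+\phi_2(ab,c)=f_1(a,a^{-1})\phi_2(b,c)+\phi_2(a,bc)$ and set $a=e_\lambda$. At this point I need $f_1(e_\lambda,e_\lambda)=1$, which follows from the MCQ Alexander pair axiom $f_1(x,e_\lambda)=1$ taken with $x=e_\lambda$. The two $\phi_2(b,c)$ terms then cancel, yielding $\phi_2(e_\lambda,b)=\phi_2(e_\lambda,bc)$ for all $b,c\in G_\lambda$. Specializing $b=e_\lambda$ gives $\phi_2(e_\lambda,c)=\phi_2(e_\lambda,e_\lambda)$ for every $c\in G_\lambda$, so the value of $\phi_2(e_\lambda,\cdot)$ is constant and the claimed equality holds.

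I do not expect a genuine obstacle: each identity drops out of a single well-chosen substitution followed by a cancellation, which is why the lemma is stated as following immediately from the definition together with \cite[Lemma~2.6]{Murao21}. The only points demanding slight care are recognizing that in the disjoint-union MCQ every element is available as a group element, so that the group-indexed cocycle conditions can be applied to an arbitrary $x$, and checking that the scalar factors being cancelled, namely $f_1(x,x^{-1})$ and $f_1(e_\lambda,e_\lambda)$, are indeed units—guaranteed by the first identity and the MCQ Alexander pair axioms respectively.
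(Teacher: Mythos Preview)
Your proposal is correct and matches the paper's approach exactly: the paper does not give a proof but states that the lemma follows ``by the definition and \cite[Lemma~2.6]{Murao21}'' immediately, and you have simply supplied the routine details---citing \cite[Lemma~2.6]{Murao21} for the four $(f_1,f_2)$-identities and reading off the two $(\phi_1,\phi_2)$-identities from suitable specializations of the twisted 2-cocycle axioms.
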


We call $(1,0)$ the \textit{trivial MCQ Alexander pair} 
and $(0,0)$ the \textit{trivial $(f_1, f_2)$-twisted 2-cocycle}, 
where $0$ and $1$ respectively denote the zero map and the constant map 
that sends all elements of the domain to the multiplicative identity $1$ of the ring.
The notion of a $(1,0)$-twisted 2-cocycle $(\phi_1,\phi_2)$ coincides with 
that of an MCQ 2-cocycle.
More precisely, 
we set the map 
\[\phi : R\left[(X \times X) \sqcup \left(\bigsqcup_{\lambda \in \Lambda}(G_\lambda \times G_\lambda) \right)\right] \to M\]
by 
\begin{align*}
\phi(x,y):=
\begin{cases}
\phi_1(x,y) &((x,y) \in X \times X),\\
\phi_2(x,y) &((x,y) \in \bigsqcup_{\lambda \in \Lambda}(G_\lambda \times G_\lambda))
\end{cases}
\end{align*}
and extend it linearly.
Then the pair $(\phi_1,\phi_2)$ is a $(1,0)$-twisted 2-cocycle 
if and only if 
$\phi$ is an MCQ 2-cocycle.
For more details, 
we refer the reader to \cite{CarterIshiiSaitoTanaka17}.
By the definition and Lemma~\ref{lem:augmentedMCQAlexanderPairProperty}, 
if $(f_1,f_2;\phi_1,\phi_2)$ is an augmented MCQ Alexander pair, 
then $(f_1,f_2;\phi_1)$ is an augmented Alexander pair~\cite{IshiiOshiro19}.
An augmented MCQ Alexander pair corresponds to an extension of an MCQ 
as shown in the following proposition.

\begin{proposition}\label{prop:augmented MCQ Alexander pair}
Let $X=\bigsqcup_{\lambda \in \Lambda}G_\lambda$ be an MCQ, 
$R$ a ring and $M$ a left $R$-module.
Let $f_1,f_2: X \times X \to R$, $\phi_1: X \times X \to M$ 
and $\phi_2: \bigsqcup_{\lambda \in \Lambda}(G_\lambda \times G_\lambda) \to M$ be maps.
If $(f_1,f_2; \phi_1,\phi_2)$ is an augmented MCQ Alexander pair, 
then
$\widetilde{X}(f_1,f_2;\phi_1,\phi_2):=\bigsqcup_{\lambda \in \Lambda}(G_\lambda \times M)$ 
is an MCQ with 
\begin{align*}
&(x,u) \tri (y,v):=(x \tri y, f_1(x,y)u+f_2(x,y)v+\phi_1(x,y)),\\
&(a,u)(b,v):=(ab,u+f_1(a,a^{-1})v+\phi_2(a,b))
\end{align*}
for any $(x,u),(y,v) \in \widetilde{X}(f_1,f_2;\phi_1,\phi_2)$ 
and $(a,u),(b,v) \in G_\lambda \times M$, 
where 
the identity of each group $G_\lambda \times M$ is $(e_\lambda,-\phi_2(e_\lambda,e_\lambda))$,
and the inverse of $(a,u) \in G_\lambda \times M$ is 
$(a^{-1},-f_1(a,a)u-\phi_2(a^{-1},a)-\phi_2(e_\lambda,e_\lambda))$.
Furthermore, the converse is true when $M=R$.
\end{proposition}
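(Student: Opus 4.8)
The plan is to exploit the product structure of both operations on $\widetilde{X}=\widetilde{X}(f_1,f_2;\phi_1,\phi_2)$: in each case the first coordinate reproduces exactly the corresponding operation of $X$ (namely $x\tri y$ and $ab$), while the second coordinate is an \emph{affine} function of the $M$-components being combined. Consequently every MCQ axiom for $\widetilde{X}$ splits into a first-coordinate part, which holds automatically because $X$ is an MCQ, and a second-coordinate ($M$-valued) part, which is the only thing requiring work. I would organize the whole verification around matching, in each such $M$-valued identity, the coefficient of every free module component separately together with the remaining constant term; these matchings turn out to be exactly the defining relations of an augmented MCQ Alexander pair, supplemented by the derived identities of Lemma~\ref{lem:augmentedMCQAlexanderPairProperty}.

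First I would check that each $G_\lambda\times M$ is a group. Associativity of the product splits into the coefficient identity $f_1(ab,(ab)^{-1})=f_1(a,a^{-1})f_1(b,b^{-1})$ --- which follows from $f_1(x,cd)=f_1(x\tri c,d)f_1(x,c)$ together with the first-argument invariance $f_1(a,x)=f_1(b,x)$ --- and a constant identity that is precisely the first $(f_1,f_2)$-twisted $2$-cocycle condition. That $(e_\lambda,-\phi_2(e_\lambda,e_\lambda))$ is a two-sided identity uses $f_1(e_\lambda,e_\lambda)=1$, the constancy $\phi_2(e_\lambda,a)=\phi_2(e_\lambda,b)$ from Lemma~\ref{lem:augmentedMCQAlexanderPairProperty}, and the specialization $\phi_2(a,e_\lambda)=f_1(a,a^{-1})\phi_2(e_\lambda,e_\lambda)$ of the first twisted $2$-cocycle condition. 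The stated inverse is verified similarly: its $u$-coefficient vanishes because $f_1(a,a^{-1})^{-1}=f_1(a,a)$ (Lemma~\ref{lem:augmentedMCQAlexanderPairProperty}), and its constant term collapses to $-\phi_2(e_\lambda,e_\lambda)$ upon applying the first twisted $2$-cocycle condition with $(b,c)=(a^{-1},a)$. Since a monoid in which every element has a right inverse is a group, this settles the group structure.

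Next come the four MCQ axioms, treated by the coordinate-splitting above. The relations $x\tri e_\lambda=x$ and $x\tri(ab)=(x\tri a)\tri b$ reduce, coordinate by coordinate, to $f_1(x,e_\lambda)=1$, to $f_1(x,ab)=f_1(x\tri a,b)f_1(x,a)$ and $f_2(x,ab)=f_1(x\tri a,b)f_2(x,a)$, to the Lemma identity $f_2(x\tri a,b)=f_2(x,ab)f_1(a,a^{-1})$, and to the third twisted $2$-cocycle condition. Self-distributivity $(x\tri y)\tri z=(x\tri z)\tri(y\tri z)$ reduces to the three self-distributive MCQ Alexander pair relations (the coefficients of $u$, $v$, $w$) and to the fourth twisted $2$-cocycle condition (the constant term). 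The homomorphism axiom $(ab)\tri x=(a\tri x)(b\tri x)$ reduces, after noting $(a\tri x)^{-1}=a^{-1}\tri x$ and using first-argument invariance, to $f_1(ab,x)=f_1(a,x)$, to $f_2(ab,x)=f_2(a,x)+f_1(b\tri x,a^{-1}\tri x)f_2(b,x)$, to the Lemma identity $f_1(ab,x)f_1(a,a^{-1})=f_1(b\tri x,a^{-1}\tri x)f_1(b,x)$, and to the fifth twisted $2$-cocycle condition. I expect the hard part to be the conjugation axiom $(a,u)\tri(b,v)=(b,v)^{-1}(a,u)(b,v)$: here the right-hand side is a genuine composite built from the inverse formula and two group multiplications, so matching it to the left-hand side is not a single coefficient comparison. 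Its $u$- and $v$-coefficients recover first-argument invariance and the relation $f_1(a,b)+f_2(a,b)=f_1(a,a^{-1}b)$, but its constant term only collapses to $\phi_1(a,b)$ after chaining the first and second twisted $2$-cocycle conditions with the identities $f_1(b^{-1},b)f_1(b,b^{-1})=1$ and $\phi_2(e_\lambda,\cdot)=\phi_2(e_\lambda,e_\lambda)$ from Lemma~\ref{lem:augmentedMCQAlexanderPairProperty}; this bookkeeping is the most delicate step.

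For the converse, assume $M=R$ and that $\widetilde{X}$ is an MCQ. The point of $M=R$ is that each module component now ranges over all of $R$ and, crucially, $R$ contains $0$ and $1$, so every second-coordinate identity above, being affine in the free components, forces --- by evaluating those components at $0$ and at $1$ --- the separate equality of each coefficient and of the constant term. Reading the computations backward then recovers the augmented MCQ Alexander pair conditions, provided one extracts them in the right order: the coefficient (i.e.\ $\phi$-free) parts of the MCQ axioms yield all of the MCQ Alexander pair relations, which in turn make Lemma~\ref{lem:augmentedMCQAlexanderPairProperty} available; with those identities in hand, the constant parts of group associativity, of the conjugation axiom, and of the remaining three MCQ axioms then yield, respectively, the first, second, third, fourth, and fifth twisted $2$-cocycle conditions. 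The only subtlety is again the conjugation axiom, whose constant part isolates the second twisted $2$-cocycle condition only once the first one and the Lemma identities are already secured, so the order of extraction must be respected.
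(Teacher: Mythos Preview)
Your proposal is correct and follows essentially the same approach as the paper: both prove the forward direction by direct verification using Lemma~\ref{lem:augmentedMCQAlexanderPairProperty} (the paper says only ``by direct calculation and by Lemma~\ref{lem:augmentedMCQAlexanderPairProperty}'' and then checks the identity and inverse explicitly), and both prove the converse by expanding each MCQ axiom of $\widetilde{X}$ and reading off the coefficient and constant identities, in the same order you describe. Your write-up is in fact somewhat more explicit than the paper's about which axiom of $\widetilde{X}$ produces which defining relation, and about why $M=R$ is needed to separate coefficients from constants, but the underlying argument is identical.
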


\begin{proof}
If $(f_1,f_2;\phi_1,\phi_2)$ is an augmented MCQ Alexander pair, 
then we have that 
$\widetilde{X}(f_1,f_2;\phi_1,\phi_2)=\bigsqcup_{\lambda \in \Lambda}(G_\lambda \times M)$ 
is an MCQ 
by direct calculation and by Lemma~\ref{lem:augmentedMCQAlexanderPairProperty}.
Furthermore, 
for any $(a,u) \in G_\lambda \times M$, 
it follows that 
\begin{align*}
&(a,u)(e_\lambda,-\phi_2(e_\lambda,e_\lambda))
=(a,u-f_1(a,a^{-1})\phi_2(e_\lambda,e_\lambda)+\phi_2(a,e_\lambda))
=(a,u),\\
&(e_\lambda,-\phi_2(e_\lambda,e_\lambda))(a,u)
=(a,-\phi_2(e_\lambda,e_\lambda)+f_1(e_\lambda,e_\lambda)u+\phi_2(e_\lambda,a))
=(a,u)
\end{align*}
and
\begin{align*}
&(a,u)(a^{-1},-f_1(a,a)u-\phi_2(a^{-1},a)-\phi_2(e_\lambda,e_\lambda))\\
&=(e_\lambda, u+f_1(a,a^{-1})(-f_1(a,a)u-\phi_2(a^{-1},a)-\phi_2(e_\lambda,e_\lambda))+\phi_2(a,a^{-1}))\\
&=(e_\lambda,f_1(a,a^{-1})(-f_1(a^{-1},a)\phi_2(a,e_\lambda)-\phi_2(a^{-1},a))+\phi_2(a,a^{-1}))\\
&=(e_\lambda,-\phi_2(a,e_\lambda)-f_1(a,a^{-1})\phi_2(a^{-1},a)+\phi_2(a,a^{-1}))\\
&=(e_\lambda, -\phi_2(e_\lambda,e_\lambda)),\\
&(a^{-1},-f_1(a,a)u-\phi_2(a^{-1},a)-\phi_2(e_\lambda,e_\lambda))(a,u)\\
&=(e_\lambda, -f_1(a,a)u-\phi_2(a^{-1},a)-\phi_2(e_\lambda,e_\lambda)+f_1(a^{-1},a)u+\phi_2(a^{-1},a))\\
&=(e_\lambda, -\phi_2(e_\lambda,e_\lambda)),
\end{align*}
which imply that 
the identity of $G_\lambda \times M$ is $(e_\lambda,-\phi_2(e_\lambda,e_\lambda))$, 
and the inverse of $(a,u) \in G_\lambda \times M$ is $(a^{-1},-f_1(a,a)u-\phi_2(a,a^{-1})-\phi_2(e_\lambda,e_\lambda))$.

Put $M:=R$.
Assume that $\widetilde{X}(f_1,f_2;\phi_1,\phi_2)=\bigsqcup_{\lambda \in \Lambda}(G_\lambda \times M)$ is an MCQ.
We prove that $(f_1,f_2;\phi_1,\phi_2)$ is an augmented MCQ Alexander pair.
For each $\lambda \in \Lambda$, $G_\lambda \times M$ is a group.
Hence for any $(a,u), (b,v), (c,w) \in G_\lambda \times M$, it follows that
\begin{align*}
&((a,u)(b,v))(c,w)\\
&= (ab,u+f_1(a,a^{-1})v+\phi_2(a,b))(c,w)\\
&= (abc,u+f_1(a,a^{-1})v+\phi_2(a,b)+f_1(ab,b^{-1}a^{-1})w+\phi_2(ab,c)),\\
&(a,u)((b,v)(c,w))\\
&= (a,u)(bc,v+f_1(b,b^{-1})w+\phi_2(b,c))\\
&= (abc,u+f_1(a,a^{-1})(v+f_1(b,b^{-1})w+\phi_2(b,c))+\phi_2(a,bc)).
\end{align*}
By the associativity of $G_\lambda \times M$, 
we have that for any $a,b,c \in G_\lambda$, 
\begin{align}
&f_1(a,a^{-1})f_1(b,b^{-1})=f_1(ab,b^{-1}a^{-1}),\\
&\phi_2(a,b)+\phi_2(ab,c)=f_1(a,a^{-1})\phi_2(b,c)+\phi_2(a,bc).\tag{$\phi$-1}
\end{align}

For any $(a,u), (b,v) \in G_\lambda \times M$, $(a,u) \tri (b,v)=(b,v)^{-1}(a,u)(b,v)$.
It follows that 
%for any $(a,x),(b,y)\in G_\lambda \times M$, 
\begin{align*}
&(a,u) \tri (b,v)
= (b^{-1}ab,f_1(a,b)u+f_2(a,b)v+\phi_1(a,b)),\\
&(b,v)^{-1}(a,u)(b,v)\\
&= (b^{-1},-f_1(b,b)v-\phi_2(b^{-1},b)-\phi_2(e_\lambda,e_\lambda))(ab,u+f_1(a,a^{-1})v+\phi_2(a,b))\\
&=(b^{-1}ab, -f_1(b,b)v-\phi_2(b^{-1},b)-\phi_2(e_\lambda,e_\lambda)\\
&\quad +f_1(b^{-1},b)(u+f_1(a,a^{-1})v+\phi_2(a,b))+\phi_2(b^{-1},ab))
\end{align*}
Hence we have that for any $a,b \in G_\lambda$, 
\begin{align}
&f_1(a,b)=f_1(b^{-1},b), \notag\\
&f_2(a,b)=-f_1(b,b)+f_1(b^{-1},b)f_1(a,a^{-1}),\\
&\phi_1(a,b)=-\phi_2(b^{-1},b)-\phi_2(e_\lambda,e_\lambda)+f_1(b^{-1},b)\phi_2(a,b)+\phi_2(b^{-1},ab).\tag{$\phi$-2}
\end{align}

For any $(x,u) \in \widetilde{X}(f_1,f_2;\phi_1,\phi_2)$ and $(a,v), (b,w) \in G_{\lambda} \times M$, 
$(x,u) \tri (e_{\lambda},-\phi_2(e_\lambda,e_\lambda))=(x,u)$ 
and $(x,u) \tri ((a,v)(b,w))=((x,u) \tri (a,v)) \tri (b,w)$.
It follows that 
%for any $(a,x) \in \widetilde{X}(f_1,f_2,f_3,f_4)$ and $(b',y), (c',z) \in G_\lambda \times M$, 
\begin{align*}
&(x,u) \tri (e_{\lambda},-\phi_2(e_\lambda,e_\lambda))\\
&=(x,f_1(x,e_{\lambda})u-f_2(x,e_{\lambda})\phi_2(e_\lambda,e_\lambda)+\phi_1(x,e_\lambda)),\\
&(x,u) \tri ((a,v)(b,w))\\
&= (x,u) \tri (ab,v+f_1(a,a^{-1})w+\phi_2(a,b))\\
&= (x \tri ab,f_1(x,ab)u+f_2(x,ab)(v+f_1(a,a^{-1})w+\phi_2(a,b))+\phi_1(x,ab)),\\
&((x,u) \tri (a,v)) \tri (b,w)\\
&= (x \tri a,f_1(x,a)u+f_2(x,a)v+\phi_1(x,a)) \tri (b,w)\\
&= ((x \tri a) \tri b,f_1(x \tri a,b)(f_1(x,a)u+f_2(x,a)v+\phi_1(x,a))\\
&\quad+f_2(x \tri a,b)w+\phi_1(x \tri a,b)).
\end{align*}
Hence we have that for any $x \in X$ and $a,b \in G_{\lambda}$, 
\begin{align}
&f_1(x,e_{\lambda}) =1,\\
&f_1(x,ab) = f_1(x \tri a,b)f_1(x,a), \\ 
&f_2(x,ab) = f_1(x \tri a,b)f_2(x,a),\\
&f_2(x,ab)f_1(a,a^{-1})=f_2(x \tri a,b), \notag\\
&f_2(x,e_{\lambda})\phi_2(e_\lambda,e_\lambda)=\phi_1(x,e_\lambda), \notag\\
&f_2(x,ab)\phi_2(a,b)+\phi_1(x,ab)=f_1(x \tri a,b)\phi_1(x,a)+\phi_1(x \tri a,b). \tag{$\phi$-3}
\end{align}

For any $(x,u), (y,v), (z,w) \in \widetilde{X}(f_1,f_2;\phi_1,\phi_2)$, 
$((x,u) \tri (y,v)) \tri (z,w)=((x,u) \tri (z,w)) \tri ((y,v) \tri (z,w))$.
It follows that 
%for any $(a,x), (b',y), (c'',z) \in \widetilde{X}(f_1,f_2,f_3,f_4)$, 
\begin{align*}
& ((x,u) \tri (y,v)) \tri (z,w) \\
&= (x \tri y,f_1(x,y)u+f_2(x,y)v+\phi_1(x,y)) \tri (z,w)\\
&= ((x \tri y) \tri z,f_1(x \tri y,z)(f_1(x,y)u+f_2(x,y)v+\phi_1(x,y))\\
&\quad+f_2(x \tri y,z)w+\phi_1(x \tri y,z)),\\
& ((x,u) \tri (z,w)) \tri ((y,v) \tri (z,w))\\
&=(x \tri z, f_1(x,z)u+f_2(x,z)w+\phi_1(x,z)) \tri (y \tri z, f_1(y,z)v+f_2(y,z)w+\phi_1(y,z))\\
&=((x \tri z)\tri (y \tri z), f_1(x \tri z, y \tri z)( f_1(x,z)u+f_2(x,z)w+\phi_1(x,z))\\
& \quad +f_2(x \tri z, y \tri z)(f_1(y,z)v+f_2(y,z)w+\phi_1(y,z))+\phi_1(x \tri z, y \tri z)).
\end{align*}
Hence we have that for any $x,y,z \in X$, 
\begin{align}
&f_1(x \tri y,z)f_1(x,y) = f_1(x \tri z, y \tri z)f_1(x,z), \\
&f_1(x \tri y,z)f_2(x,y) = f_2(x \tri z, y \tri z)f_1(y,z), \\
&f_2(x \tri y,z) =  f_1(x \tri z, y \tri z)f_2(x,z)+f_2(x \tri z, y \tri z)f_2(y,z),\\
&f_1(x \tri y,z)\phi_1(x,y)+\phi_1(x \tri y,z) \notag\\
&=f_1(x\tri z,y\tri z)\phi_1(x,z)+f_2(x\tri z,y\tri z)\phi_1(y,z)+\phi_1(x \tri z,y\tri z).\tag{$\phi$-4}
\end{align}

For any $(a,u), (b,v) \in G_\lambda \times M$ 
and $(x,w) \in \widetilde{X}(f_1,f_2;\phi_1,\phi_2)$, 
$((a,u)(b,v)) \tri (x,w)=((a,u) \tri (x,w))((b,v) \tri (x,w))$, 
where we note that 
$(a,u) \tri (x,w), (b,v) \tri (x,w) \in G_{\mu} \times M$ for some $\mu \in \Lambda$.
It follows that 
%for any $(a,x), (b,y) \in G_\lambda \times M$ and $(c',z) \in \widetilde{X}(f_1,f_2,f_3,f_4)$, 
\begin{align*}
&((a,u)(b,v)) \tri (x,w)\\
&=(ab,u+f_1(a,a^{-1})v+\phi_2(a,b)) \tri (x,w)\\
&=(ab \tri x,f_1(ab,x)(u+f_1(a,a^{-1})v+\phi_2(a,b)) + f_2(ab,x)w+\phi_1(ab,x)),\\
&((a,u) \tri (x,w))((b,v) \tri (x,w)) \\
&=(a \tri x, f_1(a,x)u+f_2(a,x)w+\phi_1(a,x))(b \tri x, f_1(b,x)v+f_2(b,x)w+\phi_1(b,x))\\
&=((a \tri x)(b \tri x),f_1(a,x)u+f_2(a,x)w+\phi_1(a,x)\\
&\quad +f_1(a \tri x,a^{-1} \tri x)(f_1(b,x)v+f_2(b,x)w+\phi_1(b,x))+\phi_2(a \tri x,b \tri x)).
\end{align*}
Hence we have that for any $a,b \in G_\lambda$ and $x \in X$, 
\begin{align}
&f_1(ab,x)=f_1(a,x),\\
&f_1(ab,x)f_1(a,a^{-1})=f_1(a \tri x,a^{-1} \tri x)f_1(b,x), \notag \\
&f_2(ab,x)=f_2(a,x)+f_1(a \tri x,a^{-1} \tri x)f_2(b,x),\\
&f_1(ab,x)\phi_2(a,b)+\phi_1(ab,x) \notag\\
&=\phi_1(a,x)+f_1(a \tri x,a^{-1} \tri x)\phi_1(b,x)+\phi_2(a\tri x,b\tri x).\tag{$\phi$-5}
\end{align}

By equations (1), (2), (9) and (10), 
we have that for any $a,b \in G_\lambda$ and $x \in X$, 
\begin{align}
&f_1(a,b)+f_2(a,b)=f_1(a,a^{-1}b), \tag{2'}\\
&f_1(a,x)=f_1(b,x), \tag{9'}\\
&f_2(ab,x)=f_2(a,x)+f_1(b \tri x,a^{-1} \tri x)f_2(b,x). \tag{10'}
\end{align}
Therefore, 
by the equations (2'), (3)--(8), (9') and (10'), 
the pair $(f_1,f_2)$ is an MCQ Alexander pair.
Moreover, 
by equations ($\phi$-1) and ($\phi$-2), 
we have that for any $a,b \in G_\lambda$, 
\begin{align*}
\phi_1(a,b)
&=-\phi_2(b^{-1},b)-\phi_2(e_\lambda,e_\lambda)+f_1(b^{-1},b)\phi_2(a,b)+\phi_2(b^{-1},ab)\\
&=-f_1(b^{-1},b)\phi_2(b,b^{-1}ab)+f_1(b^{-1},b)\phi_2(a,b),
\end{align*}
which implies 
\begin{align}
f_1(b,b^{-1})\phi_1(a,b)+\phi_2(b,b^{-1}ab)=\phi_2(a,b).\tag{$\phi$-2'}
\end{align}
Therefore, 
by the equations ($\phi$-1), ($\phi$-2') and ($\phi$-3)--($\phi$-5), 
the pair $(\phi_1,\phi_2)$ is an $(f_1,f_2)$-twisted 2-cocycle.
\end{proof}

We remark that 
the MCQ $\widetilde{X}(f_1,f_2;\phi_1,\phi_2)=\bigsqcup_{\lambda \in \Lambda}(G_\lambda \times M)$ 
in Proposition~\ref{prop:augmented MCQ Alexander pair} is an extension of $X$ 
since the projection from $\widetilde{X}(f_1,f_2;\phi_1,\phi_2)$ to $X$ sending $(x,u)$ into $x$ 
satisfies the defining condition of an extension.

%%%%%%%%%%%%%%%%%%%%%%%%%%%%%%%%%%%%%%%%%%%%%%%%%%%%%%%%%%%
%%%%%%%%%%%%%%%%%%%%%%%%%%%%%%%%%%%%%%%%%%%%%%%%%%%%%%%%%%%
\section{Affine extensions of multiple conjugation quandles}
\label{sec:Affine extensions of multiple conjugation quandles}

Let $X=\bigsqcup_{\lambda \in \Lambda}G_\lambda$ be an MCQ, $R$ a ring and 
$M$ be a left $R$-module. 
Let $f_1,f_2: X \times X \to R$, 
$f_3,f_4: \bigsqcup_{\lambda \in \Lambda}(G_\lambda \times G_\lambda) \to R$,  
$\phi_1: X \times X \to M$ and $\phi_2: \bigsqcup_{\lambda \in \Lambda}(G_\lambda \times G_\lambda) \to M$ 
be maps.
In this section, 
we consider an affine extension of $X$ using $f_1, f_2, f_3, f_4, \phi_1$ and $\phi_2$.

We define the conditions  \eqref{eq:0-i}--\eqref{eq:4-phi} 
for $f_1, f_2, f_3, f_4, \phi_1$ and $\phi_2$ 
as follows:

\begin{itemize}
\item
For any $a,b,c \in G_\lambda$, 
\begin{align}
& \text{$f_3(a,b)$ and $f_4(a,b)$ are invertible}, \tag{0-i} \label{eq:0-i}\\
& f_3(ab,c)f_3(a,b)=f_3(a,bc),\tag{0-ii} \label{eq:0-ii}\\
& f_3(ab,c)f_4(a,b)=f_4(a,bc)f_3(b,c),\tag{0-iii} \label{eq:0-iii}\\
& f_4(ab,c)=f_4(a,bc)f_4(b,c)\tag{0-iv}, \label{eq:0-iv}\\
& f_3(ab,c)\phi_2(a,b)+\phi_2(ab,c)=f_4(a,bc)\phi_2(b,c)+\phi_2(a,bc)\tag{0-$\phi$}. \label{eq:0-phi}
\end{align}

\item
For any $a,b \in G_\lambda$, 
\begin{align*}
&f_1(a,b)=f_4(b^{-1},ab)f_3(a,b),\tag{1-i} \label{eq:1-i}\\
&f_3(b,b^{-1}ab)+f_4(b,b^{-1}ab)f_2(a,b)=f_4(a,b), \tag{1-ii} \label{eq:1-ii}\\
&f_4(b,b^{-1}ab)\phi_1(a,b)+\phi_2(b,b^{-1}ab)=\phi_2(a,b). \tag{1-$\phi$} \label{eq:1-phi}
\end{align*}
%（(1-i),(1-ii)はMCQ Alexander pairの公理と同値）

\item
For any $x \in X$ and $a,b \in G_{\lambda}$, 
\begin{align*}
&f_1(x,e_{\lambda}) =1,\tag{2-i} \label{eq:2-i}\\
&f_1(x,ab) = f_1(x \tri a,b)f_1(x,a), \tag{2-ii} \label{eq:2-ii}\\ 
&f_2(x,ab)f_3(a,b) = f_1(x \tri a,b)f_2(x,a), \tag{2-iii} \label{eq:2-iii}\\
&f_2(x,ab)f_4(a,b)=f_2(x \tri a,b), \tag{2-iv} \label{eq:2-iv}\\
&f_2(x,e_\lambda)\phi_2(e_\lambda,e_\lambda)=\phi_1(x,e_\lambda), \tag{2-$\phi$i} \label{eq:2-phii}\\
&f_2(x,ab)\phi_2(a,b)+\phi_1(x,ab)=f_1(x \tri a,b)\phi_1(x,a)+\phi_1(x \tri a,b). \tag{2-$\phi$ii} \label{eq:2-phiii}\\
\end{align*}

\item
For any $x,y,z \in X$, 
\begin{align*}
&f_1(x \tri y,z)f_1(x,y) = f_1(x \tri z, y \tri z)f_1(x,z), \tag{3-i} \label{eq:3-i}\\
&f_1(x \tri y,z)f_2(x,y) = f_2(x \tri z, y \tri z)f_1(y,z), \tag{3-ii} \label{eq:3-ii}\\
&f_2(x \tri y,z) =  f_1(x \tri z, y \tri z)f_2(x,z)+f_2(x \tri z, y \tri z)f_2(y,z), \tag{3-iii} \label{eq:3-iii}\\
&f_1(x \tri y,z)\phi_1(x,y)+\phi_1(x\tri y,z)\\
&=f_1(x \tri z,y \tri z)\phi_1(x,z)+f_2(x \tri z,y \tri z)\phi_1(y,z)+\phi_1(x \tri z,y \tri z). \tag{3-$\phi$} \label{eq:3-phi}
\end{align*}

\item
For any $a,b \in G_\lambda$ and $x \in X$, 
\begin{align*}
&f_1(ab,x)f_3(a,b)=f_3(a \tri x,b \tri x)f_1(a,x), \tag{4-i} \label{eq:4-i}\\
&f_1(ab,x)f_4(a,b)=f_4(a \tri x,b \tri x)f_1(b,x), \tag{4-ii} \label{eq:4-ii}\\
&f_2(ab,x)=f_3(a \tri x,b \tri x)f_2(a,x)+f_4(a \tri x,b \tri x)f_2(b,x), \tag{4-iii} \label{eq:4-iii}\\
&f_1(ab,x)\phi_2(a,b)+\phi_1(ab,x)\\
& \quad= f_3(a \tri x,b \tri x)\phi_1(a,x)+f_4(a \tri x,b \tri x)\phi_1(b,x)+\phi_2(a \tri x,b \tri x). \tag{4-$\phi$} \label{eq:4-phi}
\end{align*}
\end{itemize}

These conditions correspond to every affine extension of an MCQ 
as seen in Proposition~\ref{Alexander 6-tuple}.
We remark that for any $a,b \in G_\lambda$, 
it follows 
\begin{align*}
&f_3(a,e_\lambda)=1=f_4(e_\lambda,a),\\
&f_3(a,b)^{-1}=f_3(ab,b^{-1}),\\
&f_4(a,b)^{-1}=f_4(a^{-1},ab),\\
&\phi_1(a,a)=0
\end{align*}
by \eqref{eq:0-i}, \eqref{eq:0-ii}, \eqref{eq:0-iv} and \eqref{eq:1-phi}.

\begin{remark}\label{replace condition}
The condition \eqref{eq:1-ii} can be replaced with the following condition: 
\begin{align*}
f_2(a,b) &=-f_3(b^{-1},ab)f_4(b^{-1},e_\lambda)f_3(b,b^{-1})+f_4(b^{-1},ab)f_4(a,b),
\end{align*} 
which was used in~\cite{Murao21} instead of the condition \eqref{eq:1-ii}.
\end{remark}

\begin{proposition}\label{Alexander 6-tuple}
Let $X=\bigsqcup_{\lambda \in \Lambda}G_\lambda$ be an MCQ, $R$ a ring and 
$M$ be a left $R$-module. 
Let $f_1,f_2: X \times X \to R$, 
$f_3,f_4: \bigsqcup_{\lambda \in \Lambda}(G_\lambda \times G_\lambda) \to R$, 
$\phi_1: X \times X \to M$ and 
$\phi_2: \bigsqcup_{\lambda \in \Lambda}(G_\lambda \times G_\lambda) \to M$ be maps.
If $f_1$, $f_2$, $f_3$, $f_4$, $\phi_1$ and $\phi_2$ satisfy the conditions \eqref{eq:0-i}--\eqref{eq:4-phi}, 
then 
$\widetilde{X}(f_1,f_2,f_3,f_4;\phi_1, \phi_2):=\bigsqcup_{\lambda \in \Lambda}(G_\lambda \times M)$ 
is an MCQ with 
\begin{align*}
& (x,u) \tri (y,v):=(x \tri y, f_1(x,y)u+f_2(x,y)v+\phi_1(x,y))%& &((x,u),(y,v) \in \widetilde{X}(f_1,f_2,f_3,f_4;\phi_1, \phi_2)
,\\
& (a,u)(b,v):=(ab,f_3(a,b)u+f_4(a,b)v+\phi_2(a,b))%& &((a,u),(b,v) \in G_\lambda \times M).
\end{align*}
for any $(x,u),(y,v) \in \widetilde{X}(f_1,f_2,f_3,f_4;\phi_1, \phi_2)$ 
and $(a,u),(b,v) \in G_\lambda \times M$, 
where 
the identity of each group $G_\lambda \times M$ is 
$(e_\lambda,-\phi_2(e_\lambda,e_\lambda))$, 
and the inverse of $(a,u) \in G_\lambda \times M$ is 
$(a^{-1},-f_3(e_\lambda,a^{-1})(f_4(a^{-1},a)u+\phi_2(a^{-1},a)+\phi_2(e_\lambda,e_\lambda)))$.
Furthermore, the converse is true when $M=R$.
\end{proposition}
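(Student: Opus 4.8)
The plan is to follow the template of the proof of Proposition~\ref{prop:augmented MCQ Alexander pair}: verify the MCQ axioms one at a time and, in each, compare the coefficients of $u$, $v$, $w$ and of the constant term in $M$ on the two sides. For the forward direction I would first check that each $G_\lambda \times M$ is a group under the stated product. Associativity amounts to expanding $((a,u)(b,v))(c,w)$ and $(a,u)((b,v)(c,w))$ and matching the four coefficients, which are exactly the conditions \eqref{eq:0-ii}, \eqref{eq:0-iii}, \eqref{eq:0-iv} and \eqref{eq:0-phi}. The stated identity $(e_\lambda,-\phi_2(e_\lambda,e_\lambda))$ and inverse formula are then confirmed by direct substitution, using the remarked identities $f_3(a,e_\lambda)=1=f_4(e_\lambda,a)$, $f_3(a,b)^{-1}=f_3(ab,b^{-1})$ and $f_4(a,b)^{-1}=f_4(a^{-1},ab)$ that follow from \eqref{eq:0-i}, \eqref{eq:0-ii} and \eqref{eq:0-iv}.

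With the group structure established, I would verify the four MCQ axioms in turn. The conjugation axiom $(a,u)\tri(b,v)=(b,v)^{-1}(a,u)(b,v)$, expanded with the explicit inverse, produces \eqref{eq:1-i} as the coefficient of $u$ (here \eqref{eq:0-iii} is needed to match $f_3(b^{-1}a,b)f_4(b^{-1},a)$ with $f_4(b^{-1},ab)f_3(a,b)$), a form of \eqref{eq:1-ii} as the coefficient of $v$, and \eqref{eq:1-phi} as the constant term. The relations $x\tri e_\lambda=x$ and $x\tri(ab)=(x\tri a)\tri b$ give the family \eqref{eq:2-i}--\eqref{eq:2-phiii}; self-distributivity gives \eqref{eq:3-i}--\eqref{eq:3-phi}; and the compatibility $(ab)\tri x=(a\tri x)(b\tri x)$ gives \eqref{eq:4-i}--\eqref{eq:4-phi}. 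Each axiom thus reduces to matching one identity per free variable against the listed condition, so the forward direction is organized but routine bookkeeping.

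For the converse I assume $M=R$ and that $\widetilde{X}$ is an MCQ, and run the same expansions backwards. All conditions except \eqref{eq:0-i} are recovered as coefficient-matching identities. The one genuinely extra step is \eqref{eq:0-i}: invertibility of $f_3$ and $f_4$ is not a coefficient comparison but a consequence of the group structure. Here I would argue that left multiplication by $(a,0)$ and right multiplication by $(b,0)$ are bijections of the group $G_\lambda\times R$; restricting them to a single fiber over $G_\lambda$ shows that the affine maps $v\mapsto f_4(a,b)v+\phi_2(a,b)$ and $u\mapsto f_3(a,b)u+\phi_2(a,b)$ on $R$ are bijective, and an affine self-bijection $v\mapsto \alpha v+\beta$ of $R$ forces $\alpha\in R^\times$. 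Once \eqref{eq:0-i} is available, the remaining conditions follow exactly as in the forward direction and as in Proposition~\ref{prop:augmented MCQ Alexander pair}.

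The main obstacle I anticipate is not any single computation but the sheer volume and careful organization of the coefficient comparisons, together with the clean derivation of \eqref{eq:0-i} from bijectivity of the group translations in the converse. A secondary point requiring care is the coefficient-of-$v$ computation in the conjugation axiom: the expansion naturally yields the form of \eqref{eq:1-ii} recorded in Remark~\ref{replace condition}, which must then be reconciled with the stated \eqref{eq:1-ii} using \eqref{eq:0-ii}--\eqref{eq:0-iv}.
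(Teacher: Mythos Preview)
Your proposal is correct and follows the same overall strategy as the paper: verify each MCQ axiom by expanding both sides and matching the coefficients of $u$, $v$, $w$ and the constant term. Two tactical points differ. First, for the conjugation axiom the paper does not expand $(b,v)^{-1}(a,u)(b,v)$; instead it rewrites the identity as $(b,v)\bigl((a,u)\tri(b,v)\bigr)=(a,u)(b,v)$ and compares coefficients there, which yields \eqref{eq:1-i}, \eqref{eq:1-ii} and \eqref{eq:1-phi} directly without passing through the Remark~\ref{replace condition} form and the reconciliation you anticipate. Second, for \eqref{eq:0-i} in the converse the paper argues more simply than via bijectivity of translations: from the existence of an identity $(g,m)$ one reads off $g=e_\lambda$ and $f_3(a,e_\lambda)=1=f_4(e_\lambda,a)$, and then the already-established \eqref{eq:0-ii} and \eqref{eq:0-iv} give $f_3(ab,b^{-1})f_3(a,b)=1$ and $f_4(a^{-1},ab)f_4(a,b)=1$. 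Both of your arguments work, but the paper's versions shorten the bookkeeping you flagged as the main obstacle.
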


Let us first prove the following lemma 
in order to prove Proposition~\ref{Alexander 6-tuple} later.

\begin{lemma}\label{group condition}
In the same situation as Proposition~\ref{Alexander 6-tuple}, 
if the maps $f_3, f_4$ and $\phi_2$ satisfy the conditions \eqref{eq:0-i}--\eqref{eq:0-phi}, 
then $G_\lambda \times M$ is a group for each $\lambda \in \Lambda$, 
where 
the identity of each group $G_\lambda \times M$ is 
$(e_\lambda,-\phi_2(e_\lambda,e_\lambda))$, 
and the inverse of $(a,u) \in G_\lambda \times M$ is 
$(a^{-1},-f_3(e_\lambda,a^{-1})(f_4(a^{-1},a)u+\phi_2(a^{-1},a)+\phi_2(e_\lambda,e_\lambda)))$.
Furthermore, the converse is true when $M=R$.
\end{lemma}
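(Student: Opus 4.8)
The plan is to check the three group axioms for $G_\lambda \times M$ by direct computation, exploiting that the product $(a,u)(b,v)=(ab,f_3(a,b)u+f_4(a,b)v+\phi_2(a,b))$ has first coordinate $ab$. Thus the projection $(a,u)\mapsto a$ is multiplicative and the group laws already hold in $G_\lambda$; everything to be verified lives in the second ($M$-)coordinate, which is affine in the ``$M$-variables''. Each axiom will therefore reduce to an identity in $M$ that I read off by comparing, separately, the coefficients of the free $M$-entries and the constant ($M$-)term.

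For associativity I expand $((a,u)(b,v))(c,w)$ and $(a,u)((b,v)(c,w))$. Both second coordinates are affine in $(u,v,w)$; matching the coefficient of $u$ gives \eqref{eq:0-ii}, the coefficient of $v$ gives \eqref{eq:0-iii}, the coefficient of $w$ gives \eqref{eq:0-iv}, and the constant term gives \eqref{eq:0-phi}, so \eqref{eq:0-ii}--\eqref{eq:0-phi} are exactly associativity. For the identity and inverse I first record the auxiliary facts listed after the conditions: $f_3(a,e_\lambda)=1=f_4(e_\lambda,a)$, $f_3(a,b)^{-1}=f_3(ab,b^{-1})$ and $f_4(a,b)^{-1}=f_4(a^{-1},ab)$, all consequences of \eqref{eq:0-i}, \eqref{eq:0-ii}, \eqref{eq:0-iv}, together with the two specializations of \eqref{eq:0-phi} obtained by setting $b=c=e_\lambda$ and $a=b=e_\lambda$, namely $\phi_2(a,e_\lambda)=f_4(a,e_\lambda)\phi_2(e_\lambda,e_\lambda)$ and $\phi_2(e_\lambda,a)=f_3(e_\lambda,a)\phi_2(e_\lambda,e_\lambda)$. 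With these, verifying that $(e_\lambda,-\phi_2(e_\lambda,e_\lambda))$ is a two-sided identity is a short substitution: the coefficient of $u$ collapses to $1$ and the constant terms cancel by the two $\phi_2$-consequences.

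The main work, and the step I expect to be the chief obstacle, is verifying that the stated inverse is genuinely two-sided. I substitute $(a^{-1}, -f_3(e_\lambda,a^{-1})(f_4(a^{-1},a)u+\phi_2(a^{-1},a)+\phi_2(e_\lambda,e_\lambda)))$ into both $(a,u)(a^{-1},\cdot)$ and $(a^{-1},\cdot)(a,u)$ and simplify each to $(e_\lambda,-\phi_2(e_\lambda,e_\lambda))$. The $u$-coefficients must cancel using the inverse formulas for $f_3$ and $f_4$ together with \eqref{eq:0-iii}, while the constant terms must collapse using the cocycle-type relation \eqref{eq:0-phi} applied with carefully chosen arguments. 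Because $R$ need not be commutative, this is delicate bookkeeping rather than a conceptual difficulty: the likely source of error is keeping the products in the correct order and invoking \eqref{eq:0-phi} with the right substitution.

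Finally, for the converse under $M=R$: assuming $G_\lambda\times R$ is a group, its identity projects to $e_\lambda$, hence equals $(e_\lambda,t)$ for some $t$; evaluating $(a,u)(e_\lambda,t)=(a,u)$ and $(e_\lambda,t)(a,u)=(a,u)$ for all $u\in R$ and separating the coefficient of $u$ from the constant term (here $1\neq0$ is used) forces $f_3(a,e_\lambda)=1$, $f_4(e_\lambda,a)=1$ and $t=-\phi_2(e_\lambda,e_\lambda)$. Associativity now holds for all $u,v,w\in R$, so the same coefficient extraction (set $u=v=w=0$ for \eqref{eq:0-phi}, then $u=1$, $v=1$, $w=1$ in turn for \eqref{eq:0-ii}--\eqref{eq:0-iv}) yields \eqref{eq:0-ii}--\eqref{eq:0-phi}. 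It remains to deduce \eqref{eq:0-i}: \eqref{eq:0-ii} taken with $c=b^{-1}$ and with $(a,b,c)=(ab,b^{-1},b)$ gives $f_3(ab,b^{-1})f_3(a,b)=1=f_3(a,b)f_3(ab,b^{-1})$, and \eqref{eq:0-iv} taken with $y=x^{-1}$ (so its left side becomes $f_4(e_\lambda,\cdot)=1$), suitably specialized, produces the two-sided inverse $f_4(a^{-1},ab)$ of $f_4(a,b)$. Hence $f_3(a,b)$ and $f_4(a,b)$ are invertible, which establishes \eqref{eq:0-i} and completes the converse.
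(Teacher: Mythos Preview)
Your proposal is correct and follows essentially the same approach as the paper's proof: both expand the product in $G_\lambda\times M$, read associativity off as the system \eqref{eq:0-ii}--\eqref{eq:0-phi}, verify the stated identity and inverse using the auxiliary identities $f_3(a,e_\lambda)=1=f_4(e_\lambda,a)$ and specializations of \eqref{eq:0-phi}, and for the converse extract $f_3(a,e_\lambda)=1=f_4(e_\lambda,a)$ from the identity element before deducing \eqref{eq:0-i} from \eqref{eq:0-ii} and \eqref{eq:0-iv}. Your write-up is somewhat more explicit about the coefficient-matching mechanism, but the logical skeleton is identical.
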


\begin{proof}
If the maps $f_3, f_4$ and $\phi_2$ satisfy the conditions \eqref{eq:0-i}--\eqref{eq:0-phi}, 
then we have that $G_\lambda \times M$ is a group for each $\lambda \in \Lambda$ 
by direct calculation.
Furthermore, 
for any $(a,u) \in G_\lambda \times M$, 
it follows that 
\begin{align*}
(a,u)(e_\lambda,-\phi_2(e_\lambda,e_\lambda))
&=(a,f_3(a,e_\lambda)u-f_4(a,e_\lambda)\phi_2(e_\lambda,e_\lambda)+\phi_2(a,e_\lambda))\\
&=(a,u-f_3(a,e_\lambda)\phi_2(a,e_\lambda)+\phi_2(a,e_\lambda))\\
&=(a,u),\\
(e_\lambda,-\phi_2(e_\lambda,e_\lambda))(a,u)
&=(a,-f_3(e_\lambda,a)\phi_2(e_\lambda,e_\lambda)+f_4(e_\lambda,a)u+\phi_2(e_\lambda,a))\\
&=(a,-f_4(e_\lambda,a)\phi_2(e_\lambda,a)+u+\phi_2(e_\lambda,a))\\
&=(a,u)
\end{align*}
and
\begin{align*}
&(a,u)(a^{-1},-f_3(e_\lambda,a^{-1})(f_4(a^{-1},a)u+\phi_2(a^{-1},a)+\phi_2(e_\lambda,e_\lambda)))\\
&=(e_\lambda, f_3(a,a^{-1})u-f_4(a,a^{-1})f_3(e_\lambda,a^{-1})(f_4(a^{-1},a)u+\phi_2(a^{-1},a)+\phi_2(e_\lambda,e_\lambda))\\
&\quad+\phi_2(a,a^{-1}))\\
&=(e_\lambda, f_3(a,a^{-1})u-f_3(a,a^{-1})f_4(a,e_\lambda)f_4(a^{-1},a)u\\
&\quad-f_4(a,a^{-1})f_3(e_\lambda,a^{-1})\phi_2(a^{-1},a)-f_4(a,a^{-1})f_3(e_\lambda,a^{-1})\phi_2(e_\lambda,e_\lambda)+\phi_2(a,a^{-1}))\\
&=(e_\lambda, -f_4(a,a^{-1})f_3(e_\lambda,a^{-1})\phi_2(a^{-1},a)-f_4(a,a^{-1})\phi_2(e_\lambda,a^{-1})+\phi_2(a,a^{-1}))\\
&=(e_\lambda, -f_4(a,a^{-1})\phi_2(a^{-1},e_\lambda))\\
&=(e_\lambda, -\phi_2(e_\lambda,e_\lambda)),\\
&(a^{-1},-f_3(e_\lambda,a^{-1})(f_4(a^{-1},a)u+\phi_2(a^{-1},a)+\phi_2(e_\lambda,e_\lambda)))(a,u)\\
&=(e_\lambda,-f_3(a^{-1},a)f_3(e_\lambda,a^{-1})(f_4(a^{-1},a)u+\phi_2(a^{-1},a)+\phi_2(e_\lambda,e_\lambda))\\
&\quad+f_4(a^{-1},a)u+\phi_2(a^{-1},a))\\
&=(e_\lambda,-\phi_2(e_\lambda,e_\lambda)),
\end{align*}
which imply that 
the identity of $G_\lambda \times M$ is 
$(e_\lambda,-\phi_2(e_\lambda,e_\lambda))$, 
and the inverse of $(a,u) \in G_\lambda \times M$ is 
$(a^{-1},-f_3(e_\lambda,a^{-1})(f_4(a^{-1},a)u+\phi_2(a^{-1},a)+\phi_2(e_\lambda,e_\lambda)))$.

Put $M:=R$.
Assume that $G_\lambda \times M$ is a group for each $\lambda \in \Lambda$.
Then it follows that 
for any $(a,u),(b,v),(c,w) \in G_\lambda \times M$, 
\begin{align*}
&((a,u)(b,v))(c,w)\\
&= (ab,f_3(a,b)u+f_4(a,b)v+\phi_2(a,b))(c,w)\\
&= (abc,f_3(ab,c)(f_3(a,b)u+f_4(a,b)v+\phi_2(a,b))+f_4(ab,c)w+\phi_2(ab,c)),\\
&(a,u)((b,v)(c,w))\\
&= (a,u)(bc,f_3(b,c)v+f_4(b,c)w+\phi_2(b,c))\\
&= (abc,f_3(a,bc)u+f_4(a,bc)(f_3(b,c)v+f_4(b,c)w+\phi_2(b,c))+\phi_2(a,bc)).
\end{align*}
By the associativity of $G_\lambda \times M$, 
we have the conditions \eqref{eq:0-ii}, \eqref{eq:0-iii}, \eqref{eq:0-iv} and \eqref{eq:0-phi}.

Let $(g,m)$ be the identity of $G_\lambda \times M$.
Then for any $(a,u) \in G_\lambda \times M$, 
it follows that 
\begin{align*}
(a,u)(g,m)=(ag,f_3(a,g)u+f_4(a,g)m+\phi_2(a,g))=(a,u),\\
(g,m)(a,u)=(ga,f_3(g,a)m+f_4(g,a)u+\phi_2(g,a))=(a,u).
\end{align*}
Hence we have $g=e_\lambda$ and $f_3(a,e_\lambda)=1=f_4(e_\lambda,a)$ for any $a \in G_\lambda$.
By \eqref{eq:0-ii} and \eqref{eq:0-iv}, 
we obtain the condition \eqref{eq:0-i}.
\end{proof}

\begin{proof}[Proof of Proposition~\ref{Alexander 6-tuple}]
If $f_1, f_2, f_3, f_4, \phi_1$ and $\phi_2$ satisfy the conditions \eqref{eq:0-i}--\eqref{eq:4-phi}, 
then we have that 
$\widetilde{X}(f_1,f_2,f_3,f_4;\phi_1,\phi_2)$ is an MCQ 
by direct calculation and by Lemma~\ref{group condition}.

Put $M:=R$.
Assume that 
$\widetilde{X}(f_1,f_2,f_3,f_4;\phi_1,\phi_2)=\bigsqcup_{\lambda \in \Lambda}(G_\lambda \times M)$ is an MCQ.
For each $\lambda \in \Lambda$, 
$G_\lambda \times M$ is a group.
Hence we have the conditions \eqref{eq:0-i}--\eqref{eq:0-phi} 
by Lemma~\ref{group condition}.

For any $(a,u), (b,v) \in G_\lambda \times M$, $(a,u) \tri (b,v)=(b,v)^{-1}(a,u)(b,v)$.
It follows that 
%\color[gray]{0.5}
%\begin{align*}
%(a,u) \tri (b,v)
%&= (a \tri b,f_1(a,b)u+f_2(a,b)v+\phi_1(a,b))\\
%&= (b^{-1}ab,f_1(a,b)u+f_2(a,b)v+\phi_1(a,b)),\\
%(b,v)^{-1}(a,u)(b,v)
%&= (b^{-1}, -f_4(b^{-1},e_\lambda)(f_3(b,b^{-1})(v+\phi_2(e_\lambda,b))+\phi_2(b,b^{-1})))\\
%   &\quad (ab,f_3(a,b)u+f_4(a,b)v+\phi_2(a,b))\\
%&= (b^{-1}ab,f_3(b^{-1},ab)(-f_4(b^{-1},e_\lambda)(f_3(b,b^{-1})(v+\phi_2(e_\lambda,b))+\phi_2(b,b^{-1}))) \\
%   &\quad +f_4(b^{-1},ab)(f_3(a,b)u+f_4(a,b)v+\phi_2(a,b))+\phi_2(b^{-1},ab)).
%\end{align*}
%\color{black}
\begin{align*}
&(b,v)((a,u) \tri (b,v))\\
&= (b,v)(b^{-1}ab,f_1(a,b)u+f_2(a,b)v+\phi_1(a,b))\\
&= (ab,f_3(b,b^{-1}ab)v+f_4(b,b^{-1}ab)(f_1(a,b)u+f_2(a,b)v+\phi_1(a,b))+\phi_2(b,b^{-1}ab)),\\
&(a,u)(b,v)
=(ab,f_3(a,b)u+f_4(a,b)v+\phi_2(a,b)).
\end{align*}
Hence we have that $f_1,f_2,f_3,f_4,\phi_1$ and $\phi_2$ 
satisfy the conditions \eqref{eq:1-i}--\eqref{eq:1-phi}.

For any $(x,u) \in \widetilde{X}(f_1,f_2,f_3,f_4;\phi_1,\phi_2)$ and $(a,v), (b,w) \in G_{\lambda} \times M$, 
$(x,u) \tri (e_\lambda,-\phi_2(e_\lambda,e_\lambda))=(x,u)$ and 
$(x,u) \tri ((a,v)(b,w))=((x,u) \tri (a,v)) \tri (b,w)$, 
where $(e_\lambda,-\phi_2(e_\lambda,e_\lambda))$ is the identity of $G_{\lambda} \times M$.
It follows that 
\begin{align*}
&(x,u) \tri (e_\lambda,-\phi_2(e_\lambda,e_\lambda))\\
&=(x,f_1(x,e_{\lambda})u-f_2(x,e_\lambda)\phi_2(e_\lambda,e_\lambda)+\phi_1(x,e_\lambda)),\\
&(x,u) \tri ((a,v)(b,w))\\
&= (x,u) \tri (ab,f_3(a,b)v+f_4(a,b)w+\phi_2(a,b))\\
&= (x \tri ab,f_1(x,ab)u+f_2(x,ab)(f_3(a,b)v+f_4(a,b)w+\phi_2(a,b))+\phi_1(x,ab)),\\
&((x,u) \tri (a,v)) \tri (b,w)\\
&= (x \tri a,f_1(x,a)u+f_2(x,a)v+\phi_1(x,a)) \tri (b,w)\\
&= ((x \tri a) \tri b,f_1(x \tri a,b)(f_1(x,a)u+f_2(x,a)v+\phi_1(x,a))\\
&\quad+f_2(x \tri a,b)w+\phi_1(x \tri a,b)).
\end{align*}
Hence we have that $f_1,f_2,f_3,f_4,\phi_1$ and $\phi_2$ 
satisfy the conditions \eqref{eq:2-i}--\eqref{eq:2-phiii}.

For any $(x,u), (y,v), (z,w) \in \widetilde{X}(f_1,f_2,f_3,f_4;\phi_1,\phi_2)$, 
$((x,u) \tri (y,v)) \tri (z,w)=((x,u) \tri (z,w)) \tri ((y,v) \tri (z,w))$.
It follows that 
\begin{align*}
& ((x,u) \tri (y,v)) \tri (z,w) \\
&= (x \tri y,f_1(x,y)u+f_2(x,y)v+\phi_1(x,y)) \tri (z,w)\\
&= ((x \tri y) \tri z,f_1(x \tri y,z)(f_1(x,y)u+f_2(x,y)v+\phi_1(x,y))\\
&\quad+f_2(x \tri y,z)w+\phi_1(x \tri y,z)),\\
& ((x,u) \tri (z,w)) \tri ((y,v) \tri (z,w))\\
&=(x \tri z, f_1(x,z)u+f_2(x,z)w+\phi_1(x,z)) \tri (y \tri z, f_1(y,z)v+f_2(y,z)w+\phi_1(y,z))\\
&=((x \tri z)\tri (y \tri z), f_1(x \tri z, y \tri z)( f_1(x,z)u+f_2(x,z)w+\phi_1(x,z))\\
& \quad +f_2(x \tri z, y \tri z)(f_1(y,z)v+f_2(y,z)w+\phi_1(y,z))+\phi_1(x \tri z,y \tri z)).
\end{align*}
Hence we have that $f_1,f_2,f_3,f_4,\phi_1$ and $\phi_2$ 
satisfy the conditions \eqref{eq:3-i}--\eqref{eq:3-phi}.

For any $(a,u), (b,v) \in G_\lambda \times M$ and $(x,w) \in \widetilde{X}(f_1,f_2,f_3,f_4;\phi_1,\phi_2)$, 
$((a,u)(b,v)) \tri (x,w)=((a,u) \tri (x,w))((b,v) \tri (x,w))$, 
where we note that 
$(a,u) \tri (x,w), (b,v) \tri (x,w) \in G_{\mu} \times M$ for some $\mu \in \Lambda$.
It follows that 
\begin{align*}
&((a,u)(b,v)) \tri (x,w)\\
&=(ab,f_3(a,b)u+f_4(a,b)v+\phi_2(a,b)) \tri (x,w)\\
&=(ab \tri x,f_1(ab,x)(f_3(a,b)u+f_4(a,b)v+\phi_2(a,b)) + f_2(ab,x)w+\phi_1(ab,x)),\\
&((a,u) \tri (x,w))((b,v) \tri (x,w)) \\
&=(a \tri x, f_1(a,x)u+f_2(a,x)w+\phi_1(a,x))(b \tri x, f_1(b,x)v+f_2(b,x)w+\phi_1(b,x))\\
&=((a \tri x)(b \tri x),f_3(a \tri x,b \tri x)(f_1(a,x)u+f_2(a,x)w+\phi_1(a,x))\\
&\quad +f_4(a \tri x,b \tri x)(f_1(b,x)v+f_2(b,x)w+\phi_1(b,x))+\phi_2(a \tri x,b \tri x).
\end{align*}
Hence we have that $f_1,f_2,f_3,f_4,\phi_1$ and $\phi_2$ 
satisfy the conditions \eqref{eq:4-i}--\eqref{eq:4-phi}.
\end{proof}

We remark that 
the MCQ $\widetilde{X}(f_1,f_2,f_3,f_4;\phi_1,\phi_2)=\bigsqcup_{\lambda \in \Lambda}(G_\lambda \times M)$ 
in Proposition~\ref{Alexander 6-tuple} 
is an extension of $X$ 
since the projection from $\widetilde{X}(f_1,f_2,f_3,f_4;\phi_1,\phi_2)$ to $X$ 
sending $(x,u)$ into $x$ 
satisfies the defining condition of an extension.
We call it an \textit{affine extension} of $X$.

%%%%%%%%%%%%%%%%%%%%%%%%%%%%%%%%%%%%%%%%%%%%%%%%%%%%%%%
%%%%%%%%%%%%%%%%%%%%%%%%%%%%%%%%%%%%%%%%%%%%%%%%%%%%%%%
\section{The reduction of affine extensions of MCQs to augmented MCQ Alexander pairs}
\label{sec:The reduction of affine extensions of MCQs to augmented MCQ Alexander pairs}

In this section, 
we see that 
any 6-tuple of maps satisfying the conditions \eqref{eq:0-i}--\eqref{eq:4-phi} 
can be reduced to some augmented MCQ Alexander pair.
That is, 
any affine extension of an MCQ can be realized by some augmented MCQ Alexander pair up to isomorphism.

Let $X=\bigsqcup_{\lambda \in \Lambda}G_\lambda$ be an MCQ, $R$ a ring and $M$ a left $R$-module.
Let $(f_1,f_2,f_3,f_4;\phi_1,\phi_2)$ and $(g_1,g_2,g_3,g_4;\psi_1,\psi_2)$ 
be 6-tuples of maps satisfying the conditions \eqref{eq:0-i}--\eqref{eq:4-phi}.
Then we write $(f_1,f_2,f_3,f_4;\phi_1,\phi_2) \sim (g_1,g_2,g_3,g_4;\psi_1,\psi_2)$ 
if there exist maps $h:X \to R^\times$ and $\eta : X \to M$ satisfying the following conditions:

\begin{itemize}
\item
For any $x,y \in X$, 
\begin{align*}
&h(x \tri y)f_1(x,y)= g_1(x,y)h(x),\\
&h(x \tri y)f_2(x,y)=g_2(x,y)h(y),\\
&h(x \tri y)\phi_1(x,y)+\eta(x \tri y)=g_1(x,y)\eta(x)+g_2(x,y)\eta(y)+\psi_1(x,y).
\end{align*}

\item
For any $a,b \in G_\lambda$, 
\begin{align*}
&h(ab)f_3(a,b)=g_3(a,b)h(a),\\
&h(ab)f_4(a,b)=g_4(a,b)h(b),\\
&h(ab)\phi_2(a,b)+\eta(ab)=g_3(a,b)\eta(a)+g_4(a,b)\eta(b)+\psi_2(a,b).
\end{align*}

\end{itemize}
Then $\sim$ is an equivalence relation 
on the set of all 6-tuples of maps satisfying the conditions \eqref{eq:0-i}--\eqref{eq:4-phi}.
We often write $(f_1,f_2,f_3,f_4;\phi_1,\phi_2) \sim_{h,\eta} (g_1,g_2,g_3,g_4;\psi_1,\psi_2)$ 
to specify $h$ and $\eta$.
This equivalence relation gives 
an isomorphic affine extensions of MCQs 
as seen in the following proposition.

%\color[gray]{0.5}
%\noindent
%反射律：$h(x):=1, \eta(x):=0$.\\
%対象律：$h'(x):=h(x)^{-1}, \eta'(x):=-h(x)^{-1}\eta(x)$.\\
%推移律：$h''(x):=h'(x)h(x), \eta''(x):=h'(x)\eta(x)+\eta'(x)$.
%\color{black}

\begin{proposition}\label{prop:cohomologous}
Let $X=\bigsqcup_{\lambda \in \Lambda}G_\lambda$ be an MCQ, 
$R$ a ring and $M$ a left $R$-module.
Let $(f_1,f_2,f_3,f_4;\phi_1,\phi_2)$ and $(g_1,g_2,g_3,g_4;\psi_1,\psi_2)$ 
be 6-tuples of maps satisfying the conditions \eqref{eq:0-i}--\eqref{eq:4-phi}.
If $(f_1,f_2,f_3,f_4;\phi_1,\phi_2) \sim (g_1,g_2,g_3,g_4;\psi_1,\psi_2)$, 
then there exists an MCQ isomorphism $\varphi : \widetilde{X}(f_1,f_2,f_3,f_4;\phi_1,\phi_2) \to \widetilde{X}(g_1,g_2,g_3,g_4;\psi_1,\psi_2)$ 
such that $\pr_X \circ \varphi=\pr_X$ 
for the projection $\pr_X: \bigsqcup_{\lambda \in \Lambda}(G_\lambda \times M) \to X$ sending $(x,u)$ to $x$.
\end{proposition}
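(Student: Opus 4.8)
The plan is to write down an explicit candidate for $\varphi$ and to verify that the six relations defining the equivalence $\sim_{h,\eta}$ are exactly what is needed for $\varphi$ to be an MCQ homomorphism. Concretely, suppose $(f_1,f_2,f_3,f_4;\phi_1,\phi_2) \sim_{h,\eta} (g_1,g_2,g_3,g_4;\psi_1,\psi_2)$ via $h : X \to R^\times$ and $\eta : X \to M$, and define
\[
\varphi : \widetilde{X}(f_1,f_2,f_3,f_4;\phi_1,\phi_2) \to \widetilde{X}(g_1,g_2,g_3,g_4;\psi_1,\psi_2), \qquad \varphi(x,u):=(x,h(x)u+\eta(x)).
\]
Since $\varphi$ leaves the first coordinate unchanged, $\pr_X \circ \varphi = \pr_X$ is immediate, and $\varphi$ sends each $G_\lambda \times M$ into the corresponding group of the target MCQ. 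For bijectivity I would observe that for each fixed $x \in X$ the assignment $u \mapsto h(x)u+\eta(x)$ is an affine bijection of $M$ with inverse $w \mapsto h(x)^{-1}(w-\eta(x))$, which is well-defined because $h(x) \in R^\times$; hence $\varphi$ is a bijection with $\varphi^{-1}(x,w)=(x,h(x)^{-1}(w-\eta(x)))$.

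Next I would check that $\varphi$ preserves the quandle operation. Expanding both sides,
\begin{align*}
\varphi((x,u)\tri(y,v)) &= (x\tri y, h(x\tri y)(f_1(x,y)u+f_2(x,y)v+\phi_1(x,y))+\eta(x\tri y)),\\
\varphi(x,u)\tri\varphi(y,v) &= (x\tri y, g_1(x,y)(h(x)u+\eta(x))+g_2(x,y)(h(y)v+\eta(y))+\psi_1(x,y)),
\end{align*}
and comparing the coefficient of $u$, the coefficient of $v$, and the remaining constant term yields precisely the three relations $h(x\tri y)f_1(x,y)=g_1(x,y)h(x)$, $h(x\tri y)f_2(x,y)=g_2(x,y)h(y)$, and $h(x\tri y)\phi_1(x,y)+\eta(x\tri y)=g_1(x,y)\eta(x)+g_2(x,y)\eta(y)+\psi_1(x,y)$ from the definition of $\sim_{h,\eta}$. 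Thus $\varphi((x,u)\tri(y,v))=\varphi(x,u)\tri\varphi(y,v)$.

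Finally I would treat the group products in the same way. For $(a,u),(b,v)\in G_\lambda\times M$,
\begin{align*}
\varphi((a,u)(b,v)) &= (ab, h(ab)(f_3(a,b)u+f_4(a,b)v+\phi_2(a,b))+\eta(ab)),\\
\varphi(a,u)\varphi(b,v) &= (ab, g_3(a,b)(h(a)u+\eta(a))+g_4(a,b)(h(b)v+\eta(b))+\psi_2(a,b)),
\end{align*}
and matching the coefficients of $u$ and $v$ and the constant term gives exactly the three group relations defining $\sim_{h,\eta}$, so $\varphi$ respects each group operation. Therefore $\varphi$ is a bijective MCQ homomorphism, hence an MCQ isomorphism, with $\pr_X\circ\varphi=\pr_X$ as required. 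The computation is entirely routine once the map $\varphi$ is chosen correctly, and no separate verification of identities or inverses is needed since those are automatic for a bijective homomorphism. The only point requiring care is bijectivity, which rests on $h$ taking values in $R^\times$ so that the affine coordinate change in the $M$-factor is invertible; beyond this the equivalence relation $\sim$ was evidently designed so that its defining relations match the homomorphism conditions coefficient by coefficient, so there is no genuine obstacle.
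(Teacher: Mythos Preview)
Your proof is correct and follows exactly the same approach as the paper: the paper defines the same map $\varphi(x,u)=(x,h(x)u+\eta(x))$ and simply asserts that it is an MCQ isomorphism with $\pr_X\circ\varphi=\pr_X$, leaving the routine verifications you carried out to the reader.
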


\begin{proof}
Assume that $(f_1,f_2,f_3,f_4;\phi_1,\phi_2) \sim_{h,\eta} (g_1,g_2,g_3,g_4;\psi_1,\psi_2)$ 
for some maps $h:X \to R^\times$ and $\eta : X \to M$.
Let $\varphi$ be the map from $\widetilde{X}(f_1,f_2,f_3,f_4;\phi_1,\phi_2)$ to $\widetilde{X}(g_1,g_2,g_3,g_4;\psi_1,\psi_2)$ 
sending $(x,u)$ to $(x,h(x)u+\eta(x))$.
Then 
$\varphi$ is an MCQ isomorphism 
satisfying $\pr_X \circ \varphi=\pr_X$.
%\color[gray]{0.5}（省略パート）
%\begin{align*}
%\varphi((x,u) \tri (y,v))
%&=\varphi((x \tri y, f_1(x,y)u+f_2(x,y)v+\phi_1(x,y)))\\
%&=(x \tri y, h(x \tri y)(f_1(x,y)u+f_2(x,y)v+\phi_1(x,y))+\eta(x \tri y))\\
%&=(x \tri y, g_1(x,y)(h(x)u+\eta(x))+g_2(x,y)(h(y)v+\eta(y))+\psi_1(x,y))\\
%&=(x,h(x)u+\eta(x)) \tri (y,h(y)v+\eta(y))\\
%&=\varphi((x,u)) \tri \varphi((y,v)),
%\end{align*}
%and for any $(a,u), (b,v) \in G_\lambda \times M$, 
%\begin{align*}
%\varphi((a,u)(b,v))
%&=\varphi((ab, f_3(a,b)u+f_4(a,b)v+\phi_2(a,b)))\\
%&=(ab, h(ab)(f_3(a,b)u+f_4(a,b)v+\phi_2(a,b))+\eta(ab))\\
%&=(ab, g_3(a,b)(h(a)u+\eta(a))+g_4(a,b)(h(b)v+\eta(b))+\psi_2(a,b))\\
%&=(a,h(a)u+\eta(a))(b,h(b)v+\eta(b))\\
%&=\varphi((a,u)) \varphi((b,v)).
%\end{align*}
%Therefore $\phi$ is an MCQ isomorphism.
%\color{black}
\end{proof}

\begin{lemma}\label{simplification of a 6-tuple}
Let $X=\bigsqcup_{\lambda \in \Lambda}G_\lambda$ be an MCQ, 
$R$ a ring and $M$ a left $R$-module.
Let $(f_1,f_2,f_3,f_4;\phi_1,\phi_2)$ be a 6-tuple of maps 
satisfying the conditions \eqref{eq:0-i}--\eqref{eq:4-phi}.
We define the maps $g_1,g_2:X \times X \to R$, 
$g_3,g_4:\bigsqcup_{\lambda \in \Lambda}(G_\lambda \times G_\lambda) \to R$, 
$\psi_1 : X \times X \to M$ 
and $\psi_2:\bigsqcup_{\lambda \in \Lambda}(G_\lambda \times G_\lambda) \to M$ by 
\begin{align*}
&g_1(x,y):=f_1(e_x,y),\\
&g_2(x,y):=f_3(x \tri y, x^{-1}\tri y)f_2(x,y)f_3(e_y,y),\\
&g_3(a,b):=1,\\
&g_4(a,b):=f_1(e_a,a^{-1}),\\
&\psi_1(x,y):=f_3(x \tri y,x^{-1} \tri y)\phi_1(x,y),\\
&\psi_2(a,b):=f_3(ab,b^{-1}a^{-1})\phi_2(a,b).\\
\end{align*}
Then the following hold.
\begin{enumerate}
\item
The 6-tuple $(g_1,g_2,g_3,g_4;\psi_1,\psi_2)$ satisfies 
the conditions \eqref{eq:0-i}--\eqref{eq:4-phi}.
\item
The quadruple $(g_1,g_2;\psi_1,\psi_2)$ is an augmented MCQ Alexander pair.
\end{enumerate}
\end{lemma}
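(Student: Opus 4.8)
The plan is to recognise that the six maps $g_1,\dots,\psi_2$ are exactly the gauge transform of $f_1,\dots,\phi_2$ by the pair $(h,\eta)$ with $h(x):=f_3(x,x^{-1})$ and $\eta\equiv 0$, where $h$ is a well-defined map $X\to R^\times$ by \eqref{eq:0-i}. First I would record the auxiliary identities $e_\lambda\tri x=e_\mu$, $(x\tri y)^{-1}=x^{-1}\tri y$ (both from $(ab)\tri x=(a\tri x)(b\tri x)$), and $f_3(e_\lambda,a)=f_3(a,a^{-1})^{-1}$ (from \eqref{eq:0-ii} together with $f_3(e_\lambda,e_\lambda)=1$). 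Using these I would verify the identifications $g_1(x,y)=h(x\tri y)f_1(x,y)h(x)^{-1}$, $g_2(x,y)=h(x\tri y)f_2(x,y)h(y)^{-1}$, $g_3(a,b)=h(ab)f_3(a,b)h(a)^{-1}$, $g_4(a,b)=h(ab)f_4(a,b)h(b)^{-1}$, $\psi_1(x,y)=h(x\tri y)\phi_1(x,y)$ and $\psi_2(a,b)=h(ab)\phi_2(a,b)$. Each is a short computation: the formula for $g_1$ comes from \eqref{eq:4-i} at $(a,b)=(x,x^{-1})$; the triviality of $g_3$ and the value of $g_4$ come from \eqref{eq:0-ii}, \eqref{eq:0-iii} and \eqref{eq:1-i}; and $g_2$ uses $h(y)^{-1}=f_3(y,y^{-1})^{-1}=f_3(e_y,y)$.

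For part (1) I would then prove the general principle that this gauge transform preserves the entire list \eqref{eq:0-i}--\eqref{eq:4-phi}. Each condition is verified by substituting the transformed maps and letting the $h$-factors telescope: an internal pair $h(\,\cdot\,)^{-1}h(\,\cdot\,)$ cancels, while the outermost factors on the two sides agree because the MCQ identities $x\tri ab=(x\tri a)\tri b$, $(x\tri y)\tri z=(x\tri z)\tri(y\tri z)$ and $ab\tri x=(a\tri x)(b\tri x)$ force the arguments of $h$ to coincide; what is left is precisely the corresponding condition for $f_1,\dots,\phi_2$, which holds by hypothesis. Since $M$ is a left $R$-module and every surviving $h$-factor acts on the left, the module-valued conditions \eqref{eq:0-phi}, \eqref{eq:1-phi}, \eqref{eq:2-phii}, \eqref{eq:2-phiii}, \eqref{eq:3-phi} and \eqref{eq:4-phi} telescope in the same way. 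This verification does not require $M=R$, which is exactly why I would do it by hand rather than try to invoke the converse in Proposition~\ref{Alexander 6-tuple}.

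For part (2) I would specialise. Since $g_3\equiv 1$ and $g_4(a,b)=f_1(e_a,a^{-1})=g_1(a,a^{-1})$ depends only on $a$, the group product of $\widetilde{X}(g_1,g_2,g_3,g_4;\psi_1,\psi_2)$ collapses to $(ab,u+g_1(a,a^{-1})v+\psi_2(a,b))$, i.e.\ to the product of $\widetilde{X}(g_1,g_2;\psi_1,\psi_2)$ in Proposition~\ref{prop:augmented MCQ Alexander pair}, so the two extensions coincide. Substituting $g_3=1$ and $g_4=g_1(\cdot,\cdot^{-1})$ into \eqref{eq:0-i}--\eqref{eq:4-phi} makes \eqref{eq:0-ii} and \eqref{eq:0-iii} trivial and \eqref{eq:0-iv} a derived relation, while the remaining relations become, term by term, the MCQ Alexander pair axioms for $(g_1,g_2)$ and the $(g_1,g_2)$-twisted $2$-cocycle conditions for $(\psi_1,\psi_2)$; these are the same matchings that identify equations $(1)$--$(10)$ and $(\phi\text{-}1)$--$(\phi\text{-}5)$ with the defining conditions in the proof of Proposition~\ref{prop:augmented MCQ Alexander pair}, and they are purely algebraic, hence valid for arbitrary $M$. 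Therefore $(g_1,g_2;\psi_1,\psi_2)$ is an augmented MCQ Alexander pair.

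The main obstacle is bookkeeping rather than conceptual. The first hurdle is choosing the correct normalising map $h(x)=f_3(x,x^{-1})$ and checking—via \eqref{eq:4-i}, \eqref{eq:0-ii}, \eqref{eq:0-iii}, \eqref{eq:1-i} and $(x\tri y)^{-1}=x^{-1}\tri y$—that its gauge transform reproduces the six maps written in the statement, and in particular that the transform of $f_3$ really collapses to the constant map $1$. The second is organising the telescoping in part (1) carefully enough that the left $R$-module structure and the possible non-commutativity of $R$ cause no trouble. Once the gauge-transform interpretation is in place, each individual verification reduces to a one- or two-line cancellation, so the length of the argument, not its difficulty, is the only real cost.
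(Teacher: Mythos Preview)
Your approach is correct and takes a genuinely different route from the paper's. The paper proves part~(1) by brute force: it cites \cite[Lemma~4.2]{Murao21} for the $R$-valued conditions and then verifies each of \eqref{eq:0-phi}, \eqref{eq:1-phi}, \eqref{eq:2-phii}, \eqref{eq:2-phiii}, \eqref{eq:3-phi}, \eqref{eq:4-phi} for $(g_1,\dots,\psi_2)$ by a direct chain of substitutions using the conditions for $(f_1,\dots,\phi_2)$. For part~(2) it again cites \cite{Murao21} for $(g_1,g_2)$ being an MCQ Alexander pair and then checks the five twisted $2$-cocycle identities for $(\psi_1,\psi_2)$ by explicit computation. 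Only afterwards, in the proof of the main theorem, does the paper introduce $h(x)=f_3(x,x^{-1})$ and observe that $(f_1,\dots,\phi_2)\sim_{h,0}(g_1,\dots,\psi_2)$.

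You invert this order: you first identify the six $g$-maps as the $(h,0)$-gauge transform of the six $f$-maps, and then prove once and for all that this transform preserves the whole list \eqref{eq:0-i}--\eqref{eq:4-phi} by the telescoping mechanism you describe. This is cleaner and eliminates the repetitive case analysis; it also makes the subsequent theorem essentially automatic. Your handling of part~(2) --- specialising \eqref{eq:0-i}--\eqref{eq:4-phi} at $g_3\equiv 1$, $g_4(a,b)=g_1(a,a^{-1})$ and matching the result against the augmented MCQ Alexander pair axioms --- is also valid; as you note, this matching is the purely formal half of the argument in Proposition~\ref{prop:augmented MCQ Alexander pair}, which does not use $M=R$ since the $R$-valued identities are already among \eqref{eq:0-i}--\eqref{eq:4-phi}. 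One caveat: the phrase ``term by term'' slightly undersells the work, since a few of the MCQ Alexander pair axioms (e.g.\ $g_1(a,b)+g_2(a,b)=g_1(a,a^{-1}b)$) require combining several of the specialised conditions together with $g_1(a,b)=g_1(e_\lambda,b)$ from \eqref{eq:1-i}, just as the paper derives $(2')$, $(9')$, $(10')$ from $(1)$, $(2)$, $(9)$, $(10)$; but this is exactly the manipulation you point to, so the plan stands.
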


\begin{proof}
\begin{enumerate}
\item
It is sufficient to prove that 
the conditions \eqref{eq:0-phi}, \eqref{eq:1-phi}, \eqref{eq:2-phii}, \eqref{eq:2-phiii}, 
\eqref{eq:3-phi} and \eqref{eq:4-phi} hold 
since 
the remaining conditions follow 
from~\cite[Lemma 4.2]{Murao21} and 
Remark~\ref{replace condition} immediately.

For any $a,b,c \in G_\lambda$, it follows
\begin{align*}
&g_3(ab,c)\psi_2(a,b)+\psi_2(ab,c)\\
&=f_3(ab,b^{-1}a^{-1})\phi_2(a,b)+f_3(abc,c^{-1}b^{-1}a^{-1})\phi_2(ab,c)\\
&=f_3(abc,c^{-1}b^{-1}a^{-1})(f_3(e_\lambda,abc)f_3(ab,b^{-1}a^{-1})\phi_2(a,b)+\phi_2(ab,c))\\
&=f_3(abc,c^{-1}b^{-1}a^{-1})(f_3(ab,c)\phi_2(a,b)+\phi_2(ab,c))\\
&=f_3(abc,c^{-1}b^{-1}a^{-1})(f_4(a,bc)\phi_2(b,c)+\phi_2(a,bc))\\
&=f_3(abc,c^{-1}b^{-1}a^{-1})(f_1(bca,a^{-1})f_3(bc,a)\phi_2(b,c)+\phi_2(a,bc)),
\end{align*}
where the third (resp. fourth) equality comes from \eqref{eq:0-ii} (resp. \eqref{eq:0-phi}), 
and where the fifth equality comes from \eqref{eq:1-i}.
On the other hand, 
it follows 
\begin{align*}
&g_4(a,bc)\psi_2(b,c)+\psi_2(a,bc)\\
&=f_1(e_\lambda,a^{-1})f_3(bc,c^{-1}b^{-1})\phi_2(b,c)+f_3(abc,c^{-1}b^{-1}a^{-1})\phi_2(a,bc)\\
&=f_3(abc,c^{-1}b^{-1}a^{-1})(f_3(e_\lambda,abc)f_1(e_\lambda,a^{-1})f_3(bc,c^{-1}b^{-1})\phi_2(b,c)+\phi_2(a,bc))\\
&=f_3(abc,c^{-1}b^{-1}a^{-1})(f_1(bca,a^{-1})f_3(e_\lambda,bca)f_3(bc,c^{-1}b^{-1})\phi_2(b,c)+\phi_2(a,bc))\\
&=f_3(abc,c^{-1}b^{-1}a^{-1})(f_1(bca,a^{-1})f_3(bc,a)\phi_2(b,c)+\phi_2(a,bc)),
\end{align*}
where the third (resp. fourth) equality comes from \eqref{eq:4-i} (resp. \eqref{eq:0-ii}).
Hence we have 
\begin{align*}
g_3(ab,c)\psi_2(a,b)+\psi_2(ab,c)=g_4(a,bc)\psi_2(b,c)+\psi_2(a,bc),
\end{align*}
which implies that
$(g_1,g_2,g_3,g_4;\psi_1,\psi_2)$ satisfies the condition \eqref{eq:0-phi}.

For any $a,b \in G_\lambda$, it follows
\begin{align*}
&g_4(b,b^{-1}ab)\psi_1(a,b)+\psi_2(b,b^{-1}ab)\\
&=f_1(e_\lambda,b^{-1})f_3(b^{-1}ab,b^{-1}a^{-1}b)\phi_1(a,b)+f_3(ab,b^{-1}a^{-1})\phi_2(b,b^{-1}ab)\\
&=f_3(ab,b^{-1}a^{-1})(f_3(e_\lambda,ab)f_1(e_\lambda,b^{-1})f_3(b^{-1}ab,b^{-1}a^{-1}b)\phi_1(a,b)+\phi_2(b,b^{-1}ab))\\
&=f_3(ab,b^{-1}a^{-1})(f_1(b^{-1}ab^2,b^{-1})f_3(e_\lambda,b^{-1}ab^2)f_3(b^{-1}ab,b^{-1}a^{-1}b)\phi_1(a,b)\\
&\quad+\phi_2(b,b^{-1}ab))\\
&=f_3(ab,b^{-1}a^{-1})(f_1(b^{-1}ab^2,b^{-1})f_3(b^{-1}ab,b)\phi_1(a,b)+\phi_2(b,b^{-1}ab))\\
&=f_3(ab,b^{-1}a^{-1})(f_4(b,b^{-1}ab)\phi_1(a,b)+\phi_2(b,b^{-1}ab))\\
&=f_3(ab,b^{-1}a^{-1})\phi_2(a,b)\\
&=\psi_2(a,b),
\end{align*}
where the third (resp. fourth) equality comes from \eqref{eq:4-i} (resp. \eqref{eq:0-ii}), 
and where the fifth (resp. sixth) equality comes from \eqref{eq:1-i} (resp. \eqref{eq:1-phi}).
Hence $(g_1,g_2,g_3,g_4;\psi_1,\psi_2)$ satisfies the condition \eqref{eq:1-phi}.

For any $x \in X$ and $a,b \in G_\lambda$, it follows
\begin{align*}
g_2(x,e_\lambda)\psi_2(e_\lambda,e_\lambda)
&=f_3(x,x^{-1})f_2(x,e_\lambda)f_3(e_\lambda,e_\lambda)f_3(e_\lambda,e_\lambda)\phi_2(e_\lambda,e_\lambda)\\
&=f_3(x,x^{-1})f_2(x,e_\lambda)\phi_2(e_\lambda,e_\lambda)\\
&=f_3(x,x^{-1})\phi_1(x,e_\lambda)\\
&=\psi_1(x,e_\lambda),
\end{align*}
where the third equality comes from \eqref{eq:2-phii}, 
and 
\begin{align*}
&g_2(x,ab)\psi_2(a,b)+\psi_1(x,ab)\\
&=f_3(x \tri ab, x^{-1} \tri ab)f_2(x,ab)f_3(e_\lambda,ab)f_3(ab,b^{-1}a^{-1})\phi_2(a,b)\\
&\quad+f_3(x \tri ab, x^{-1} \tri ab)\phi_1(x,ab)\\
&=f_3(x \tri ab, x^{-1} \tri ab)(f_2(x,ab)\phi_2(a,b)+\phi_1(x,ab))\\
&=f_3(x \tri ab, x^{-1} \tri ab)(f_1(x \tri a,b)\phi_1(x,a)+\phi_1(x \tri a,b))\\
&=f_1(e_x \tri a,b)f_3(x \tri a,x^{-1} \tri a)\phi_1(x,a)+f_3((x \tri a) \tri b, (x^{-1} \tri a) \tri b)\phi_1(x \tri a,b)\\
&=g_1(x \tri a,b)\psi_1(x,a)+\psi_1(x \tri a,b),
\end{align*}
where the third (resp. fourth) equality comes from \eqref{eq:2-phiii} (resp. \eqref{eq:4-i}).
Hence $(g_1,g_2,g_3,g_4;\psi_1,\psi_2)$ satisfies the conditions \eqref{eq:2-phii} and \eqref{eq:2-phiii}.

For any $x,y,z \in X$, it follows
\begin{align*}
&g_1(x \tri y,z)\psi_1(x,y)+\psi_1(x \tri y,z)\\
&=f_1(e_x \tri y,z)f_3(x \tri y,x^{-1} \tri y)\phi_1(x,y)+f_3((x \tri y) \tri z, (x^{-1} \tri y) \tri z)\phi_1(x \tri y,z)\\
&=f_3((x \tri y) \tri z,(x^{-1} \tri y) \tri z)f_1(x \tri y,z)\phi_1(x,y)\\
&\quad+f_3((x \tri y) \tri z, (x^{-1} \tri y) \tri z)\phi_1(x \tri y,z)\\
&=f_3((x \tri y) \tri z,(x^{-1} \tri y) \tri z)(f_1(x \tri y,z)\phi_1(x,y)+\phi_1(x \tri y,z))\\
&=f_3((x \tri z) \tri (y \tri z),(x^{-1} \tri z) \tri (y \tri z))(f_1(x \tri z, y \tri z)\phi_1(x,z)\\
&\quad+f_2(x \tri z,y \tri z)\phi_1(y,z)+\phi_1(x \tri z,y \tri z))\\
&=f_1(e_x \tri z, y \tri z)f_3(x \tri z,x^{-1} \tri z)\phi_1(x,z)\\
&\quad +f_3((x \tri z) \tri (y \tri z),(x^{-1} \tri z) \tri (y \tri z))f_2(x \tri z,y \tri z)f_3(e_y \tri z,y \tri z)\\
&\qquad \quad f_3(y \tri z,y^{-1} \tri z)\phi_1(y,z)\\
&\quad +f_3((x \tri z) \tri (y \tri z), (x^{-1} \tri z) \tri (y \tri z))\phi_1(x \tri z,y \tri z)\\
&=g_1(x \tri z, y \tri z)\psi_1(x,z)+g_2(x \tri z,y \tri z)\psi_1(y,z)+\psi_1(x \tri z,y \tri z),
\end{align*}
where the second (resp. fourth) equality comes from \eqref{eq:4-i} (resp. \eqref{eq:3-phi}), 
and where the fifth equality comes from \eqref{eq:4-i}.
Hence $(g_1,g_2,g_3,g_4;\psi_1,\psi_2)$ satisfies the condition \eqref{eq:3-phi}.

For any $a,b \in G_\lambda$ and $x \in X$, it follows
\begin{align*}
&g_1(ab,x)\psi_2(a,b)+\psi_1(ab,x)\\
&=f_1(e_\lambda,x)f_3(ab,b^{-1}a^{-1})\phi_2(a,b)+f_3(ab \tri x,b^{-1}a^{-1} \tri x)\phi_1(ab,x)\\
&=f_3(ab \tri x,b^{-1}a^{-1} \tri x)f_1(ab,x)\phi_2(a,b)+f_3(ab \tri x,b^{-1}a^{-1} \tri x)\phi_1(ab,x)\\
&=f_3(ab \tri x,b^{-1}a^{-1} \tri x)(f_1(ab,x)\phi_2(a,b)+\phi_1(ab,x))\\
&=f_3(ab \tri x,b^{-1}a^{-1} \tri x)(f_3(a \tri x,b \tri x)\phi_1(a,x)+f_4(a \tri x,b \tri x)\phi_1(b,x)\\
&\quad+\phi_2(a \tri x,b \tri x))\\
&=f_3(a \tri x,a^{-1} \tri x)\phi_1(a,x)+f_4(a \tri x,a^{-1} \tri x)f_3(b \tri x,b^{-1}a^{-1} \tri x)\phi_1(b,x)\\
&\quad+f_3(ab \tri x,b^{-1}a^{-1} \tri x)\phi_2(a \tri x,b \tri x)\\
&=f_3(a \tri x,a^{-1} \tri x)\phi_1(a,x)\\
&\quad+f_4(a \tri x,a^{-1} \tri x)f_3(e_\lambda \tri x,a^{-1} \tri x)f_3(b \tri x,b^{-1} \tri x)\phi_1(b,x)\\
&\quad+f_3(ab \tri x,b^{-1}a^{-1} \tri x)\phi_2(a \tri x,b \tri x)\\
&=f_3(a \tri x,a^{-1} \tri x)\phi_1(a,x)+f_1(e_\lambda \tri x,a^{-1} \tri x)f_3(b \tri x,b^{-1} \tri x)\phi_1(b,x)\\
&\quad+f_3(ab \tri x,b^{-1}a^{-1} \tri x)\phi_2(a \tri x,b \tri x)\\
&=g_3(a \tri x,b \tri x)\psi_1(a,x)+g_4(a \tri x,b \tri x)\psi_1(b,x)+\psi_2(a \tri x,b \tri x),
\end{align*}
where the second (resp. fourth) equality comes from \eqref{eq:4-i} (resp. \eqref{eq:4-phi}), 
and where the fifth equality comes from \eqref{eq:0-ii} and \eqref{eq:0-iii}, 
and where the sixth (resp. seventh) equality comes from \eqref{eq:0-ii} (resp. \eqref{eq:1-i}).
Hence $(g_1,g_2,g_3,g_4;\psi_1,\psi_2)$ satisfies the condition \eqref{eq:4-phi}.

This completes the proof.

\item
By~\cite[Lemma 4.2]{Murao21}, 
the pair $(g_1,g_2)$ is an MCQ Alexander pair.
We show that 
the pair $(\psi_1,\psi_2)$ is a $(g_1,g_2)$-twisted 2-cocycle.

For any $a,b,c \in G_\lambda$, 
it follows 
\begin{align*}
&\psi_2(a,b)+\psi_2(ab,c)\\
&=f_3(ab,b^{-1}a^{-1})\phi_2(a,b)+f_3(abc,c^{-1}b^{-1}a^{-1})\phi_2(ab,c)\\
&=f_3(abc,c^{-1}b^{-1}a^{-1})f_3(ab,c)\phi_2(a,b)+f_3(abc,c^{-1}b^{-1}a^{-1})\phi_2(ab,c)\\
&=f_3(abc,c^{-1}b^{-1}a^{-1})(f_3(ab,c)\phi_2(a,b)+\phi_2(ab,c))\\
&=f_3(abc,c^{-1}b^{-1}a^{-1})(f_4(a,bc)\phi_2(b,c)+\phi_2(a,bc))\\
&=f_4(a,a^{-1})f_3(bc,c^{-1}b^{-1}a^{-1})\phi_2(b,c)+f_3(abc,c^{-1}b^{-1}a^{-1})\phi_2(a,bc)\\
&=f_4(a,a^{-1})f_3(e_\lambda,a^{-1})f_3(bc,c^{-1}b^{-1})\phi_2(b,c)+f_3(abc,c^{-1}b^{-1}a^{-1})\phi_2(a,bc)\\
&=f_1(e_\lambda,a^{-1})f_3(bc,c^{-1}b^{-1})\phi_2(b,c)+f_3(abc,c^{-1}b^{-1}a^{-1})\phi_2(a,bc)\\
&=g_1(a,a^{-1})\psi_2(b,c)+\psi_2(a,bc),
\end{align*}
where the second (resp. fourth) equality comes from \eqref{eq:0-ii} (resp. \eqref{eq:0-phi}), 
and where the fifth (resp. sixth) equality comes from \eqref{eq:0-iii} (resp. \eqref{eq:0-ii}), 
and where the seventh equality comes from \eqref{eq:1-i}.

For any $a,b \in G_\lambda$, 
it follows 
\begin{align*}
&g_1(b,b^{-1})\psi_1(a,b)+\psi_2(b,b^{-1}ab)\\
&=f_1(e_\lambda,b^{-1})f_3(b^{-1}ab,b^{-1}a^{-1}b)\phi_1(a,b)+f_3(ab,b^{-1}a^{-1})\phi_2(b,b^{-1}ab)\\
&=f_3(a,a^{-1})f_1(b^{-1}ab,b^{-1})\phi_1(a,b)+f_3(ab,b^{-1}a^{-1})\phi_2(b,b^{-1}ab)\\
&=f_3(ab,b^{-1}a^{-1})f_3(a,b)f_1(b^{-1}ab,b^{-1})\phi_1(a,b)+f_3(ab,b^{-1}a^{-1})\phi_2(b,b^{-1}ab)\\
&=f_3(ab,b^{-1}a^{-1})(f_1(b^{-1}ab^2,b^{-1})f_3(b^{-1}ab,b)\phi_1(a,b)+\phi_2(b,b^{-1}ab))\\
&=f_3(ab,b^{-1}a^{-1})(f_4(b,b^{-1}ab)\phi_1(a,b)+\phi_2(b,b^{-1}ab))\\
&=f_3(ab,b^{-1}a^{-1})\phi_2(a,b)\\
&=\psi_2(a,b)
\end{align*}
where the second (resp. third) equality comes from \eqref{eq:4-i} (resp. \eqref{eq:0-ii}), 
and where the fourth (resp. fifth) equality comes from \eqref{eq:4-i} (resp. \eqref{eq:1-i}), 
and where the sixth equality comes from \eqref{eq:1-phi}.

For any $x \in X$ and $a,b \in G_\lambda$, 
it follows 
\begin{align*}
&g_2(x,ab)\psi_2(a,b)+\psi_1(x,ab)\\
&=f_3(x \tri ab,x^{-1} \tri ab)f_2(x,ab)f_3(e_\lambda,ab)f_3(ab,b^{-1}a^{-1})\phi_2(a,b)\\
&\quad+f_3(x \tri ab, x^{-1} \tri ab)\phi_1(x,ab)\\
&=f_3(x \tri ab,x^{-1} \tri ab)(f_2(x,ab)\phi_2(a,b)+\phi_1(x,ab))\\
&=f_3(x \tri ab,x^{-1} \tri ab)(f_1(x \tri a,b)\phi_1(x,a)+\phi_1(x \tri a,b))\\
&=f_1(e_x \tri a,b)f_3(x \tri a,x^{-1} \tri a)\phi_1(x,a)+f_3((x \tri a) \tri b,(x^{-1} \tri a) \tri b)\phi_1(x \tri a,b)\\
&=g_1(x \tri a,b)\psi_1(x,a)+\psi_1(x \tri a,b),
\end{align*}
where the third (resp. fourth) equality comes from \eqref{eq:2-phiii} (resp. \eqref{eq:4-i}).

For any $x,y,z \in X$, 
it follows 
\begin{align*}
&g_1(x \tri y,z)\psi_1(x,y)+\psi_1(x \tri y,z)\\
&=f_1(e_x \tri y,z)f_3(x\tri y,x^{-1}\tri y)\phi_1(x,y)+f_3((x\tri y)\tri z,(x^{-1}\tri y)\tri z)\phi_1(x\tri y,z)\\
&=f_3((x\tri y)\tri z,(x^{-1}\tri y)\tri z)f_1(x\tri y,z)\phi_1(x,y)\\
&\quad+f_3((x\tri y)\tri z,(x^{-1}\tri y)\tri z)\phi_1(x\tri y,z)\\
&=f_3((x\tri y)\tri z,(x^{-1}\tri y)\tri z)(f_1(x\tri y,z)\phi_1(x,y)+\phi_1(x\tri y,z))\\
&=f_3((x\tri y)\tri z,(x^{-1}\tri y)\tri z)(f_1(x\tri z,y\tri z)\phi_1(x,z)+f_2(x\tri z,y\tri z)\phi_1(y,z)\\
&\quad+\phi_1(x\tri z,y\tri z))\\
&=f_1(e_x \tri z,y\tri z)f_3(x\tri z,x^{-1}\tri z)\phi_1(x,z)+f_3((x \tri z)\tri (y\tri z),(x^{-1} \tri z)\tri (y\tri z))\\
&\quad \quad f_2(x \tri z,y\tri z)f_3(e_y \tri z,y \tri z)f_3(y\tri z,y^{-1}\tri z)\phi_1(y,z)\\
&\quad +f_3((x \tri z) \tri (y \tri z),(x^{-1} \tri z) \tri (y \tri z))\phi_1(x \tri z,y\tri z)\\
&=g_1(x \tri z,y\tri z)\psi_1(x,z)+g_2(x \tri z,y\tri z)\psi_1(y,z)+\psi_1(x \tri z,y\tri z),
\end{align*}
where the second (resp. fourth) equality comes from \eqref{eq:4-i} (resp. \eqref{eq:3-phi}), 
and where the fifth equality comes from \eqref{eq:4-i}.

For any $a,b \in G_\lambda$ and $x \in X$, 
it follows 
\begin{align*}
&g_1(ab,x)\psi_2(a,b)+\psi_1(ab,x)\\
&=f_1(e_\lambda,x)f_3(ab,b^{-1}a^{-1})\phi_2(a,b)+f_3(ab \tri x,b^{-1}a^{-1}\tri x)\phi_1(ab,x)\\
&=f_3(ab \tri x,b^{-1}a^{-1}\tri x)f_1(ab,x)\phi_2(a,b)+f_3(ab \tri x,b^{-1}a^{-1}\tri x)\phi_1(ab,x)\\
&=f_3(ab \tri x,b^{-1}a^{-1}\tri x)(f_1(ab,x)\phi_2(a,b)+\phi_1(ab,x))\\
&=f_3(ab \tri x,b^{-1}a^{-1}\tri x)(f_3(a\tri x,b\tri x)\phi_1(a,x)+f_4(a\tri x,b\tri x)\phi_1(b,x)\\
&\quad+\phi_2(a\tri x,b\tri x))\\
&=f_3(a\tri x, a^{-1}\tri x)\phi_1(a,x)+f_4(a \tri x,a^{-1}\tri x)f_3(b\tri x,b^{-1}a^{-1}\tri x)\phi_1(b,x)\\
&\quad+f_3(ab \tri x,b^{-1}a^{-1}\tri x)\phi_2(a\tri x,b\tri x)\\
&=f_3(a\tri x, a^{-1}\tri x)\phi_1(a,x)\\
&\quad+f_4(a \tri x,a^{-1}\tri x)f_3(e_\lambda \tri x,a^{-1}\tri x)f_3(b\tri x,b^{-1}\tri x)\phi_1(b,x)\\
&\quad+f_3(ab \tri x,b^{-1}a^{-1}\tri x)\phi_2(a\tri x,b\tri x)\\
&=f_3(a\tri x, a^{-1}\tri x)\phi_1(a,x)+f_1(e_\lambda \tri x,a^{-1}\tri x)f_3(b\tri x, b^{-1}\tri x)\phi_1(b,x)\\
&\quad+f_3((a\tri x)(b\tri x),(b^{-1}\tri x)(a^{-1}\tri x))\phi_2(a \tri x,b\tri x)\\
&=\psi_1(a,x)+g_1(a \tri x, a^{-1}\tri x)\psi_1(b,x)+\psi_2(a \tri x,b\tri x)
\end{align*}
where the second (resp. fourth) equality comes from \eqref{eq:4-i} (resp. \eqref{eq:4-phi}), 
and where the fifth equality comes from \eqref{eq:0-ii} and \eqref{eq:0-iii}, 
and where the sixth (resp. seventh) equality comes from \eqref{eq:0-ii} (resp. \eqref{eq:1-i}).

Therefore the pair $(\psi_1,\psi_2)$ is a $(g_1,g_2)$-twisted 2-cocycle.
\end{enumerate}
\end{proof}

\begin{theorem}
Let $X=\bigsqcup_{\lambda \in \Lambda}G_\lambda$ be an MCQ, 
$R$ a ring and $M$ a left $R$-module.
For any 6-tuple $(f_1,f_2,f_3,f_4;\phi_1,\phi_2)$ of maps 
satisfying the conditions \eqref{eq:0-i}--\eqref{eq:4-phi}, 
there exists an augmented MCQ Alexander pair $(g_1,g_2; \psi_1,\psi_2)$ 
such that $\widetilde{X}(f_1,f_2,f_3,f_4;\phi_1,\phi_2) \cong \widetilde{X}(g_1,g_2;\psi_1,\psi_2)$.
\end{theorem}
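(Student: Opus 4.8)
The plan is to combine Lemma~\ref{simplification of a 6-tuple} with the gauge equivalence $\sim$ of Proposition~\ref{prop:cohomologous}. Given the $6$-tuple $(f_1,f_2,f_3,f_4;\phi_1,\phi_2)$ satisfying \eqref{eq:0-i}--\eqref{eq:4-phi}, let $(g_1,g_2,g_3,g_4;\psi_1,\psi_2)$ be the explicit $6$-tuple produced from it in Lemma~\ref{simplification of a 6-tuple}. By part~(1) of that lemma it again satisfies \eqref{eq:0-i}--\eqref{eq:4-phi}, and by part~(2) the quadruple $(g_1,g_2;\psi_1,\psi_2)$ is an augmented MCQ Alexander pair; this is the pair claimed in the statement. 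It therefore only remains to produce an MCQ isomorphism between the two affine extensions and to recognize the target as the extension coming from $(g_1,g_2;\psi_1,\psi_2)$.

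First I would exhibit the gauge transformation. The plan is to set $h\colon X\to R^\times$, $h(x):=f_3(x,x^{-1})$, which lands in $R^\times$ by \eqref{eq:0-i}, and to take $\eta:=0$, and then to verify $(f_1,f_2,f_3,f_4;\phi_1,\phi_2)\sim_{h,\eta}(g_1,g_2,g_3,g_4;\psi_1,\psi_2)$. Using that $a\mapsto a\tri y$ is a group homomorphism, so $(x\tri y)^{-1}=x^{-1}\tri y$ and hence $h(x\tri y)=f_3(x\tri y,x^{-1}\tri y)$, while $h(ab)=f_3(ab,b^{-1}a^{-1})$, each multiplicative defining equality of $\sim$ collapses to a single structural relation: the $f_1$-equality $h(x\tri y)f_1(x,y)=g_1(x,y)h(x)$ is precisely \eqref{eq:4-i} with $a=x$, $b=x^{-1}$; the $f_2$-equality reduces to $f_3(e_y,y)f_3(y,y^{-1})=1$, a consequence of \eqref{eq:0-ii}; the $f_3$-equality $h(ab)f_3(a,b)=h(a)$ is \eqref{eq:0-ii} directly; and the $f_4$-equality $h(ab)f_4(a,b)=g_4(a,b)h(b)$ follows by chaining \eqref{eq:0-iii}, \eqref{eq:0-ii} and \eqref{eq:1-i}. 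For the two $M$-valued equalities, the point is that $\psi_1(x,y)=f_3(x\tri y,x^{-1}\tri y)\phi_1(x,y)=h(x\tri y)\phi_1(x,y)$ and $\psi_2(a,b)=f_3(ab,b^{-1}a^{-1})\phi_2(a,b)=h(ab)\phi_2(a,b)$, so with $\eta=0$ both equalities reduce to these very identities and hold automatically. Thus $\eta=0$ suffices precisely because $\psi_1,\psi_2$ were defined to absorb the twist by $h$.

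With the equivalence established, Proposition~\ref{prop:cohomologous} supplies an MCQ isomorphism $\widetilde{X}(f_1,f_2,f_3,f_4;\phi_1,\phi_2)\cong\widetilde{X}(g_1,g_2,g_3,g_4;\psi_1,\psi_2)$. The last step is to identify the target with $\widetilde{X}(g_1,g_2;\psi_1,\psi_2)$ from Proposition~\ref{prop:augmented MCQ Alexander pair}. Both MCQs have underlying set $\bigsqcup_{\lambda\in\Lambda}(G_\lambda\times M)$ and the identical quandle operation; their group operations coincide because $g_3\equiv 1$ and $g_4(a,b)=f_1(e_a,a^{-1})=g_1(a,a^{-1})$, so in either description $(a,u)(b,v)=(ab,u+g_1(a,a^{-1})v+\psi_2(a,b))$, which is exactly the operation of Proposition~\ref{prop:augmented MCQ Alexander pair}. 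Hence the two MCQs are literally equal, and composing gives $\widetilde{X}(f_1,f_2,f_3,f_4;\phi_1,\phi_2)\cong\widetilde{X}(g_1,g_2;\psi_1,\psi_2)$.

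The substantive computation—that $(g_1,g_2;\psi_1,\psi_2)$ is genuinely an augmented MCQ Alexander pair—has already been discharged in Lemma~\ref{simplification of a 6-tuple}, so the only real obstacle at this stage is guessing the correct normalizing unit $h(x)=f_3(x,x^{-1})$. Once it is in hand, every one of the six conditions for $\sim$ is a one-line application of the relations \eqref{eq:0-i}--\eqref{eq:4-i}, and the additive data is handled for free by $\eta=0$ owing to the normalization built into $\psi_1$ and $\psi_2$.
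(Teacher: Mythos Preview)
Your proposal is correct and follows essentially the same route as the paper: the same normalizing unit $h(x)=f_3(x,x^{-1})$ with $\eta=0$, the same verification of $(f_1,f_2,f_3,f_4;\phi_1,\phi_2)\sim_{h,0}(g_1,g_2,g_3,g_4;\psi_1,\psi_2)$ via \eqref{eq:0-ii}, \eqref{eq:0-iii}, \eqref{eq:1-i}, \eqref{eq:4-i}, and the same final identification $\widetilde{X}(g_1,g_2,g_3,g_4;\psi_1,\psi_2)=\widetilde{X}(g_1,g_2;\psi_1,\psi_2)$ using $g_3\equiv 1$ and $g_4(a,b)=g_1(a,a^{-1})$.
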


\begin{proof}
Let $(f_1,f_2,f_3,f_4;\phi_1,\phi_2)$ be 
a 6-tuple of maps satisfying the conditions \eqref{eq:0-i}--\eqref{eq:4-phi}, 
and let $g_1,g_2:X \times X \to R$, 
$g_3,g_4:\bigsqcup_{\lambda \in \Lambda}(G_\lambda \times G_\lambda) \to R$, 
$\psi_1 : X \times X \to M$ 
and $\psi_2:\bigsqcup_{\lambda \in \Lambda}(G_\lambda \times G_\lambda) \to M$ 
be the maps defined by 
\begin{align*}
&g_1(x,y):=f_1(e_x,y),\\
&g_2(x,y):=f_3(x \tri y, x^{-1}\tri y)f_2(x,y)f_3(e_y,y),\\
&g_3(a,b):=1,\\
&g_4(a,b):=f_1(e_a,a^{-1}),\\
&\psi_1(x,y):=f_3(x \tri y,x^{-1} \tri y)\phi_1(x,y),\\
&\psi_2(a,b):=f_3(ab,b^{-1}a^{-1})\phi_2(a,b).
\end{align*}
We note that $g_4(a,b)=g_1(a,a^{-1})$.
By Lemma~\ref{simplification of a 6-tuple} (1), 
the 6-tuple $(g_1,g_2,g_3,g_4;\psi_1,\psi_2)$ satisfies the conditions \eqref{eq:0-i}--\eqref{eq:4-phi}.
We define the map $h:X \to R^{\times}$ by $h(x):=f_3(x,x^{-1})$.
Then for any $x,y \in X$, it follows 
\begin{align*}
h(x \tri y)f_1(x,y)
=f_3(x \tri y, x^{-1} \tri y)f_1(x,y)
=f_1(e_x,y)f_3(x,x^{-1})
=g_1(x,y)h(x),
\end{align*}
where the second equality comes from \eqref{eq:4-i}, 
and
\begin{align*}
&h(x \tri y)f_2(x,y)
=f_3(x \tri y, x^{-1} \tri y)f_2(x,y)f_3(e_y,y)f_3(y,y^{-1})
=g_2(x,y)h(y),\\
&h(x \tri y)\phi_1(x,y)
=f_3(x \tri y, x^{-1} \tri y)\phi_1(x,y)
=\psi_1(x,y).
\end{align*}
For any $a,b \in G_\lambda$, it follows 
\begin{align*}
h(ab)f_3(a,b)
=f_3(ab, b^{-1}a^{-1})f_3(a,b)
=f_3(a,a^{-1})
=g_3(a,b)h(a),
\end{align*}
where the second equality comes from \eqref{eq:0-ii}, 
and
\begin{align*}
h(ab)f_4(a,b)
&=f_3(ab,b^{-1}a^{-1})f_4(a,b)f_3(e_\lambda,b)f_3(b,b^{-1})\\
&=f_4(a,a^{-1})f_3(b,b^{-1}a^{-1})f_3(e_\lambda,b)f_3(b,b^{-1})\\
&=f_4(a,a^{-1})f_3(e_\lambda,a^{-1})f_3(b,b^{-1})\\
&=f_1(e_\lambda,a^{-1})f_3(b,b^{-1}) \\
&=g_4(a,b)h(b),
\end{align*}
where the second (resp. third) equality comes from \eqref{eq:0-iii} (resp. \eqref{eq:0-ii}), 
and where the fourth equality comes from \eqref{eq:1-i}, 
and
\begin{align*}
h(ab)\phi_2(a,b)
=f_3(ab,b^{-1}a^{-1})\phi_2(a,b)
=\psi_2(a,b).
\end{align*}
Hence we have $(f_1,f_2,f_3,f_4;\phi_1,\phi_2)\sim_{h,0} (g_1,g_2,g_3,g_4;\psi_1,\psi_2)$, 
where $0$ denotes the zero map.
That is, $\widetilde{X}(f_1,f_2,f_3,f_4;\phi_1,\phi_2) \cong \widetilde{X}(g_1,g_2,g_3,g_4;\psi_1,\psi_2)$ 
by Proposition~\ref{prop:cohomologous}.
By Lemma~\ref{simplification of a 6-tuple} (2), 
$(g_1,g_2;\psi_1,\psi_2)$ is an augmented MCQ Alexander pair.
Therefore we obtain that 
$\widetilde{X}(g_1,g_2,g_3,g_4;\psi_1,\psi_2) = \widetilde{X}(g_1,g_2;\psi_1,\psi_2)$ by the definitions.
This completes the proof.
\end{proof}

%%%%%%%%%%%%%%%%%%%%%%%%%%%%%%%%%%%%%%%%%%%%%%%%%%%%%%%%%%%
%%%%%%%%%%%%%%%%%%%%%%%%%%%%%%%%%%%%%%%%%%%%%%%%%%%%%%%%%%%
\section*{Acknowledgment}
The author would like to thank Atsushi Ishii for his valuable comments.
The author was supported by JSPS KAKENHI Grant Number 18J10105.

%%%%%%%%%%%%%%%%%%%%%%%%%%%%%%%%%%%%%%%%%%%%%%%%%%%%%%%%%%%
%%%%%%%%%%%%%%%%%%%%%%%%%%%%%%%%%%%%%%%%%%%%%%%%%%%%%%%%%%%


\begin{thebibliography}{000}

\bibitem{Alexander28}
 J.~W.~Alexander,
 \textit{Topological invariants of knots and links},
 Trans. Amer. Math. Soc. \textbf{30} (1928), 275--306.

\bibitem{AndruskiewitschGrana03}
 N.~Andruskiewitsch and M.~Gra\~na,
 \textit{From racks to pointed Hopf algebras},
 Adv. Math. \textbf{178} (2003), no. 2, 177--243.

\bibitem{CarterIshiiSaitoTanaka17}
J.~S.~Carter, A.~Ishii, M.~Saito and K.~Tanaka, 
\textit{Homology for quandles with partial group operations},
Pacific J. Math. \textbf{287} (2017), no. 1, 19--48.

\bibitem{CarterJelsovskyKamadaLangfordSaito03}
 J.~S.~Carter, D.~Jelsovsky, S.~Kamada, L.~Langford, and M.~Saito,
 \textit{Quandle cohomology and state-sum invariants of knotted curves and surfaces},
 Trans. Amer. Math. Soc. \textbf{355} (2003) 3947--3989.

\bibitem{Ishii08}
 A.~Ishii,
 \textit{Moves and invariants for knotted handlebodies},
 Algebr. Geom. Topol. \textbf{8} (2008), 1403--1418.

\bibitem{Ishii15}
 A.~Ishii,
 \textit{A multiple conjugation quandle and handlebody-knots},
Topology Appl. \textbf{196} (2015), 492--500.

%\bibitem{Ishii15(2)}
% A.~Ishii,
% \textit{The Markov theorem for spatial graphs and handlebody-knots with Y-orientations},
%Internat. J. Math. \textbf{26} (2015), 1550116, 23pp.

%\bibitem{Ishii19}
% A. Ishii, 
%\textit{The fundamental multiple conjugation quandle of a handlebody-knot},
%in preparation.

\bibitem{IshiiIwakiriJangOshiro13}
A. Ishii, M. Iwakiri, Y. Jang and K. Oshiro,
\textit{A $G$-family of quandles and handlebody-knots},
Illinois J. Math. \textbf{57} (2013), 817--838.

%\bibitem{IshiiMuraopre}
%A. Ishii and T. Murao,
%\textit{Twisted derivatives with MCQ Alexander pairs for multiple conjugation quandles},
%preprint.

\bibitem{IshiiNikkuniOshiro18}
A. Ishii, R. Nikkuni and K. Oshiro, 
\textit{On calculations of the twisted Alexander ideals for spatial graphs, handlebody-knots and surface links}, 
Osaka J. Math. \textbf{55} (2018), 297--313.

\bibitem{IshiiOshiro19}
 A. Ishii and K. Oshiro, 
\textit{Twisted Derivatives with Alexander pairs for quandles},
preprint.

\bibitem{Joyce82}
 D.~Joyce,
 \textit{A classifying invariant of knots, the knot quandle},
 J.~Pure Appl. Alg. \textbf{23} (1982) 37--65.

\bibitem{Lin01}
 X.~S.~Lin,
 \textit{Representations of knot groups and twisted Alexander polynomials},
 Acta Math. Sin. (Engl. Ser.) \textbf{17} (2001), 361--380.

\bibitem{Matveev82}
 S.~V.~Matveev,
 \textit{Distributive groupoids in knot theory},
 Mat. Sb. (N.S.) \textbf{119} (\textbf{161}) (1982) 78--88.
 
 \bibitem{Murao21}
 T.~Murao,
 \textit{Linear extensions of multiple conjugation quandles and MCQ Alexander pairs},
 to appear in J. Algebra Appl.

\bibitem{Wada94}
 M.~Wada,
 \textit{Twisted Alexander polynomial for finitely presentable groups},
 Topology \textbf{33} (1994), no. 2, 241--256.

\end{thebibliography}
\end{document}